\numberwithin{equation}{section}
\newtheorem{theorem}{Theorem}[section]
\newtheorem{lemma}[theorem]{Lemma}
\theoremstyle{definition}
\newtheorem*{remark*}{Remark}
\newtheorem*{proposition*}{Proposition}
\newtheorem*{acknowledgement*}{Acknowledgement}
\newcommand{\Z}{\mathbb{Z}}
\newcommand{\Q}{\mathbb{Q}}
\newcommand{\R}{\mathbb{R}}
\newcommand{\C}{\mathbb{C}}
\DeclareMathOperator{\GL}{GL}
\newcommand{\SL}{SL}
\newcommand{\hyp}[4]{~_2F_1\left(\left.\begin{smallmatrix}#1,~#2\\#3\end{smallmatrix}\right\rvert#4\right)}
\renewcommand{\Re}{\text{Re\,}}
\DeclareMathOperator{\Res}{Res}
\newcommand{\D}{\mathcal{D}}
\newcommand{\uhp}{\mathcal{H}}
\newcommand{\im}{\textrm{Im}}
\newcommand{\re}{\textrm{Re}}
\newcommand{\eps}{\varepsilon}
\title{Twisting moduli for $GL(2)$}
\author[B. Bedert]{Benjamin Bedert}
\author[G. Cooper]{George Cooper$^\star$}
\author[T. Oliver]{Thomas Oliver}
\author[P. Zhang]{Pengcheng Zhang}
\thanks{$^{\star}$This work was partially supported by the EPSRC vacation bursary and the Andrew Mason Memorial Scholarship (Balliol College).}
\address{(B.B., P.Z.) St John’s College, Oxford, OX1 3JP, UK.}
\email{benjamin.bedert@sjc.ox.ac.uk, pengcheng.zhang@sjc.ox.ac.uk}
\address{(G.C.) Balliol College, Oxford, OX1 3BJ, UK.}
\email{george.cooper@balliol.ox.ac.uk}
\address{(T.O.) Mathematical Institute, Andrew Wiles Building, University of Oxford, Radcliffe Observatory Quater, Woodstock Road, Oxford, OX2 6GG, UK.}
\email{thomas.oliver@maths.ox.ac.uk}
\date{\today}
\begin{document}
\subjclass[2010]{11F66; 11M41; 11F12.}
\maketitle
\subsection*{Abstract}
We prove various converse theorems for automorphic forms on $\Gamma_0(N)$, each assuming fewer twisted functional equations than the last. We show that no twisting at all is needed for holomorphic modular forms in the case that $N\in\{18,20,24\}$ - these integers are the smallest multiples of $4$ or $9$ not covered by earlier work of Conrey--Farmer. This development is a consequence of finding generating sets for $\Gamma_0(N)$ such that each generator can be written as a product of special matrices. As for real-analytic Maass forms of even (resp. odd) weight we prove the analogous statement for $1\leq N\leq 12$ and $N\in\{16,18\}$ (resp. $1\leq N\leq 12$, $14\leq N\leq18$ and $N\in\{20,23,24\}$).
\section{Introduction}\label{section.Intro}
The use of twisted functional equations in the characterisation of automorphic representations dates back to Weil's archetypal converse theorem for holomorphic modular forms \cite{UDBDRDF}. It is a problem of long-standing interest to limit the number of required twists \cite{PSconverse}, \cite{modFormsNDirichletSeries}, \cite{Wli}, \cite{EOHCT}, \cite{DPZ}. In \cite{PSconverse} it is shown that, for any fixed prime $p$, it suffices to assume the analytic properties of twists by primitive characters modulo $p^r$ for all $r\geq0$. On the other hand, the main result of \cite{DPZ} states that there exists a prime $q$ such that the analytic properties of twists by primitive characters modulo $q$ are sufficient to capture the holomorphic modular forms on $\Gamma_0(N)$. In \cite{CEOHCT} it is demonstrated that there is a density 1 subset of the primes from which we can choose $q$.  Still in the holomorphic case it was shown in \cite{EOHCT} and \cite{CFOS} that, for $1\leq N\leq17$ and $N=23$, no twisting at all is required. This result generalises earlier work of Hecke which applies to the cases $1\leq N\leq4$ \cite{Hecke1936}. 

In Theorem~\ref{thm:WeilMaass} we prove a converse theorem for Maass forms on $\Gamma_0(N)$ assuming twisted functional equations for primitive Dirichlet characters with the same moduli as Weil \cite{UDBDRDF}. For context, note that a converse theorem for Maass forms of small level was established by Maass \cite{Maass1949} and the converse theorem of Jacquet--Langlands \cite{AFOGL2} applies to Maass forms on $\GL_2(\mathbb{A}_F)$. Nevertheless, generalising Weil's classical approach to real-analytic forms had been regarded as a difficult problem, see for example \cite[Section~3.4]{GM}, with two different approaches having appeared only recently in \cite[Section~3]{TCTMF} and \cite{MSSU}. We note that the results of \cite{MSSU} apply to half-integer weight, though they make the additional assumption of analytic properties for twists by imprimitive characters. Theorem~\ref{thm:WeilMaass} uses an extension of the method used in \cite[Section~3]{TCTMF} to Maass forms of arbitrary integer weight $k$. In particular, we develop further the method of ``two circles'' implemented in \cite{TCTMF}. By the theory of weight raising and lowering operators, it is enough to work with the cases $k=0,1$. As \cite[Theorem~3.1]{TCTMF} solves the problem when $k=0$, it remains to solve the case $k=1$. In Theorem~\ref{thm.DPZmaass} we show that it is in fact sufficient to assume only twists by primitive Dirichlet characters modulo a single prime $q$ constrained by congruence conditions as in \cite{DPZ} (equation~\eqref{eq.specialmodulus}). In Theorem~\ref{thm.CFMaass}, we show that no twisting at all is required for $1\leq N\leq 12$ and $N\in\{16,18\}$ when $k=0$ and $1\leq N\leq 12$, $14\leq N\leq18$ and $N\in\{20,23,24\}$ when $k=1$. In Theorem~\ref{thm.Holomorphic}, we deduce from the same argument that the main result of \cite{EOHCT} can be extended to the cases $N\in\{18,20,24\}$. The numbers 20 and 24 (resp. 18) are the smallest multiples of $2^2$ (resp. $3^2$) not covered by Conrey--Farmer. Our proof works by writing generating sets for $\Gamma_0(N)$ such that each generator can be written as a product of special matrices. Whilst the matrices in $\Gamma_0(N)$ have integer coefficients, we sometimes allow matrices with non-integer coefficients amongst the factors. In each of the Theorems established here, it would be interesting to allow for the various $L$-functions to have poles as in \cite{WCTWP}, \cite[Theorem~1.1]{TCTMF}, and \cite[Theorem~1.2]{RALF}.
\subsection*{Notation}
By $\mathcal{H}$ we denote the upper half-plane, the smooth functions on which are denoted by $C^{\infty}(\uhp)$. The weight-$k$ Laplace-Beltrami operator on $C^{\infty}(\uhp)$ is
\begin{equation}\label{eq.WeightkLBoperator}
\Delta_k = -y^2 \left( \frac{\partial^2}{\partial x^2} + \frac{\partial^2}{\partial y^2} \right) + iky \frac{\partial}{\partial x}. 
\end{equation}
A weight-$k$ Maass form on $\Gamma_0(N)$ is a real-analytic eigenfunction of $\Delta_k$ with polynomial growth at the cusps of $\Gamma_0(N)$ satisfying a weight-$k$ automorphic transformation law with respect to $\Gamma_0(N)$. Given two complex numbers $\kappa$ and $\mu$, the Whittaker function $W_{\kappa,\nu}:\mathbb{R}_{>0}\rightarrow\mathbb{C}$ is annihilated by the differential operator:
\begin{equation}\label{eq.WhittakerODE}
\frac{d^2}{dy^2} + \left( -\frac{1}{4} + \frac{\kappa}{y} + \frac{1/4 - \mu^2}{y^2} \right), 
\end{equation}
and satisfies the following asymptotic formula as $y \rightarrow \infty$:
\begin{equation}\label{eq.WhittakerGrowth}
W_{\kappa,\mu}(y) \sim e^{-\frac{y}{2}} y^{\kappa}.
\end{equation}
We use the following variants of the gamma function:
\begin{equation}
\Gamma_{\R}(s)=\pi^{-s/2}\Gamma\left(\frac{s}{2}\right),\ \ \Gamma_{\C}(s)=(2\pi)^{-s}\Gamma(s).
\end{equation}
If $\psi$ is a Dirichlet character mod $q$, then the associated Gauss sum is:
\begin{equation}
\tau(\psi) = \sum_{a \text{ mod } q} \psi(a) \exp \left( 2\pi ia/q \right).
\end{equation}

\subsection*{Acknowledgements}
The first and fourth authors would like to thank Yicheng Sun for his help with computer calculations used in Section~\ref{sec.CFmaass}. The second author was supported by an EPSRC vacation bursary and Andrew Mason Memorial Scholarship (Balliol College). The third author thanks Dan Fretwell and Michalis Neururer.

\section{Theorems}
In the Theorems below, a function $F(s)$ on $\C$ is called \textsl{EBV} if it is entire and bounded on every vertical strip.
\begin{theorem} \label{thm:WeilMaass}
Let $\varepsilon\in\{\pm 1\}$ and $\nu\in\mathbb{C}\backslash\{0\}$. Say $N\in\Z_{>0}$ and let $\chi$ be a Dirichlet character mod $N$. Assume that $(a_n)_{n=-\infty}^{\infty},(b_n)_{n=-\infty}^{\infty}$ are complex sequences such that $a_n,b_n=O(n^{\sigma})$ for some $\sigma\in\mathbb{R}_{>0}$ and, for $n > 0$, $a_{-n} = \eps \nu a_n$ and $b_{-n} = \eps \nu b_n$.  Given a Dirichlet character $\psi$ modulo $p$ (either 1 or a prime number), define:
\begin{equation}\label{eq.L}
L_f(s, \psi) = \sum_{n = 1}^{\infty} \psi(n) a_n n^{-s}, \ \ L_g(s, \psi) = \sum_{n = 1}^{\infty} \overline{\psi}(n) b_n n^{-s}, \ \ \Re(s)>\sigma+1.
\end{equation}
and 
\begin{equation}\label{eq: completed l functions}
\begin{split}
\Lambda_f(s, \psi) = \Gamma_{\R} \left( s + \frac{1 + \psi(-1) \eps}{2} + \nu \right) \Gamma_{\R} \left( s + \frac{1 - \psi(-1) \eps}{2} - \nu \right) L_f(s, \psi), \\ 
\Lambda_g(s, \psi) = \Gamma_{\R} \left( s + \frac{1 + \psi(-1) \eps}{2} + \nu \right) \Gamma_{\R} \left( s + \frac{1 - \psi(-1) \eps}{2} - \nu \right) L_g(s, \psi).
\end{split}
\end{equation}
If $\psi=\textbf{1}$ is the trivial Dirichlet character we omit it from the notation. For a finite set $\mathcal{S}$ of primes including $2$ and those dividing $N$, define $\mathcal{P}$ to be the complement of $\mathcal{S}$ in the set of all primes. Assume the following:
\begin{enumerate}
\item The functions $\Lambda_f(s)$ and $\Lambda_g(s)$ continue to holomorphic functions on $\C-\{\varepsilon\nu,1-\varepsilon\nu\}$ with at most simple poles in the set $\{ \eps \nu, 1 - \eps \nu \}$, are uniformly bounded on every vertical strip outside of a small neighbourhood around each pole and satisfy the functional equation
\begin{equation}\label{eq.FE1}
\Lambda_f(s) = N^{\frac{1}{2} - s} \Lambda_g(1 - s).
\end{equation}
\item For all primitive Dirichlet characters $\psi$ of conductor $q \in \mathcal{P}$, continue to entire functions which are EBV and satisfy the functional equation
\begin{equation}\label{eq.TwistedFE} 
\Lambda_f(s, \psi) = \psi(N) \chi(q) \frac{\tau(\psi)}{\tau(\overline{\psi})} (q^2N)^{\frac{1}{2} - s} \Lambda_g(1 - s, \overline{\psi}).
\end{equation}
\end{enumerate}
Define $f_0, \ g_0 : \R_{>0} \rightarrow \C$ by
\begin{equation}\label{eq.f0}
f_0(y) = - \Res_{s = \eps \nu} \ \Lambda_f(s) y^{\frac{1}{2} - \eps \nu}, \quad g_0(y) = \Res_{s = \eps \nu} \ \Lambda_g(s) y^{\frac{1}{2} - \eps \nu},
\end{equation}
define $\tilde{f}, \ \tilde{g} : \uhp \rightarrow \C$ by
\begin{equation}\label{eq.ftilde}
\begin{split}
\tilde{f}(x + iy) = \sum_{n=1}^{\infty} \frac{a_n}{\sqrt{\pi n}} \left( W_{\frac{1}{2}, \nu}(4 \pi ny) e(nx) + \eps \nu W_{-\frac{1}{2}, \nu}(4 \pi ny) e(-nx) \right), \\
\tilde{g}(x + iy) = \sum_{n=1}^{\infty} \frac{b_n}{\sqrt{\pi n}} \left( W_{\frac{1}{2}, \nu}(4 \pi ny) e(nx) + \eps \nu W_{-\frac{1}{2}, \nu}(4 \pi ny) e(-nx) \right),
\end{split}
\end{equation}
and define $f,g:\mathcal{H}\rightarrow\C$ by
\begin{equation}\label{eq.deffg}
f(x+iy)=f_0(y)+\tilde{f}(x+iy),\ \ g(x+iy)=g_0(y)+\tilde{g}(x+iy).
\end{equation}
Then $f$ and $g$ are weight-$1$ Maass forms on $\Gamma_0(N)$ satisfying
\begin{equation}\label{eq.gzf1z}
f(z) = \frac{iz}{|z|}\ \ g\left( -\frac{1}{Nz} \right).
\end{equation}
\end{theorem}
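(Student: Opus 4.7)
The plan is to extend the Weil--Razar-style argument of \cite[Section~3]{TCTMF} from weight $0$ to weight $1$. First, I would verify that $\tilde{f}, \tilde{g}$ converge absolutely on compacta of $\uhp$ to real-analytic functions annihilated by $\Delta_1 - (\tfrac{1}{4}-\nu^2)$; this follows from the polynomial bound on $(a_n), (b_n)$ together with the Whittaker decay \eqref{eq.WhittakerGrowth} and the ODE \eqref{eq.WhittakerODE}, after the standard substitution $y \mapsto 4\pi n y$. Periodicity under $T = \sabcd{1}{1}{0}{1}$ is built into the Fourier expansion \eqref{eq.ftilde}, and the $f_0, g_0$ terms control the growth at the cusp $\infty$.

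The second step is to deduce the Fricke relation \eqref{eq.gzf1z} from the untwisted functional equation \eqref{eq.FE1} by inverse Mellin transform. I would express $f(iy)$ as a Mellin--Barnes contour integral against $\Lambda_f(s) y^{\tfrac{1}{2}-s}$ and shift the contour past $\Re(s) = \tfrac{1}{2}$. The simple poles at $s = \varepsilon\nu, 1-\varepsilon\nu$ contribute precisely $f_0(y)$ and the Fricke transform of $g_0$; the remaining integral, after applying \eqref{eq.FE1} and the substitution $s \mapsto 1-s$, becomes the Mellin--Barnes representation of $g(i/(Ny))$ multiplied by the weight-$1$ factor, yielding \eqref{eq.gzf1z} on the imaginary axis and then on all of $\uhp$ by real-analyticity.

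The third and most delicate step is to promote the Fricke relation to full $\Gamma_0(N)$-automorphy using the twisted equations \eqref{eq.TwistedFE}. For each primitive $\psi$ of prime conductor $q \in \mathcal{P}$, the Gauss-sum identity $\psi(n) = \tau(\psi)^{-1}\sum_{a \bmod q} \overline{\psi}(a) e(na/q)$ recasts $\Lambda_f(s,\psi)$ as a weighted sum of Mellin transforms of the additive translates $f(z + a/q)$ along the imaginary axis. Running the same contour-shift argument with \eqref{eq.TwistedFE} in place of \eqref{eq.FE1} and inverting the Gauss-sum transform yields invariance of $f$ under a matrix $\gamma_q \in \Gamma_0(N)$ whose lower-left entry is divisible by $Nq^2$. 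As $q$ ranges over $\mathcal{P}$, these $\gamma_q$ together with $T$ and the Fricke involution $\fricke$ generate $\Gamma_0(N)$ by the classical lemma of Weil.

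The chief obstacle, and the main novelty over \cite[Theorem~3.1]{TCTMF}, is the weight-$1$ multiplier $iz/|z|$. In weight $0$ the automorphy factor is trivial, so composing transformations arising from distinct twists is unproblematic; in weight $1$ a non-trivial unitary cocycle must be tracked through every Mellin inversion and every matrix composition, and matched against the expected $\chi$-twisted weight-$1$ cocycle on $\Gamma_0(N)$. The ``two circles'' device of \cite{TCTMF} handles exactly this difficulty: by deriving the transformation law from a single twisted functional equation on two overlapping arcs in $\uhp$, the ambiguous pole contributions can be made to cancel and the phase factors to match, producing the clean weight-$1$ automorphy identity.
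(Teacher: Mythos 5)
Your outline has the right overall shape (additive twists via Gauss sums, Mellin inversion and contour shifting, a Weil-type argument with infinite-order elliptic elements), but the central analytic step is missing. You propose to prove \eqref{eq.gzf1z} by a Mellin--Barnes argument ``on the imaginary axis and then on all of $\uhp$ by real-analyticity''. That extension step fails: two real-analytic functions of two real variables that agree on a curve need not agree off it, and this is precisely the obstruction that makes converse theorems for Maass forms harder than for holomorphic forms, where the identity theorem does apply. The paper circumvents it by taking the Mellin transform of $f$ along \emph{every} line $x=wy$, $w\in\R$ (equations \eqref{eq.Mellinh}--\eqref{eq.Mellinh2}); this introduces the hypergeometric factors $H_f^{\ell}(s,w)$, whose inversion identity \eqref{eq.Hfidentity} is what produces the weight-$1$ multiplier $i|w+i|/(w+i)=i|z|/z$, and since every point of $\uhp$ lies on some such line the resulting identity holds everywhere. (The classical alternative, due to Maass, is to show that both $f-i\,g|_1H_N$ and its $x$-derivative vanish on the imaginary axis and invoke uniqueness for the elliptic PDE; either way a single transform on a single line does not suffice, and your proposal supplies neither mechanism. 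The same defect propagates into your treatment of the twisted translates $f(z+a/q)$.)

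Your third step is also not correct as stated. The twisted functional equations do not yield invariance under matrices $\gamma_q$ which ``together with $T$ and the Fricke involution generate $\Gamma_0(N)$'' --- the Fricke involution is not in $\Gamma_0(N)$, and no such generation statement is used. What the twists actually give is the relation \eqref{cor.gW} between $f_\psi$ and $g_{\overline{\psi}}|_1W_{q^2N}$, hence the identity \eqref{eq.identity} over the space $V$ of mean-zero functions on $(\Z/q\Z)^\times$; an arbitrary $\gamma=\sabcd{a}{b}{cN}{d}\in\Gamma_0(N)$ is then handled \emph{individually} by choosing primes $q\equiv a$ and $s\equiv d$ modulo $|c|N$ and exhibiting an infinite-order elliptic matrix $M(q,s,r)$ fixing $G_1=g|_1\sabcd{q}{r}{cN}{s}-\chi(q)g$. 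Finally, the relevant endgame in weight $1$ is the ``one circle'' Lemma~\ref{prop: one circle}, not the ``two circles'' Lemma~\ref{prop:2circles} you invoke: in weight $1$ a single infinite-order elliptic symmetry already forces $G_1=0$ (via $\tilde c(z)=\omega\tilde c(\omega^2z)$ with $\omega=-1$), whereas the two-circle lemma only yields constancy and is the tool for weight $0$.
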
 
In the above Theorem, we do not assume that the $L$-functions have an Euler product. Including this assumption, we may restrict the set of twisting moduli. 
\begin{theorem}\label{thm.DPZmaass}
Let $\varepsilon$, $\nu$, $N$, $\chi$, $(a_n)_{n=-\infty}^{\infty}$, and $(b_n)_{n=-\infty}^{\infty}$ be as in Theorem~\ref{thm:WeilMaass} and let $k\in\{0,1\}$. Given a Dirichlet character $\psi$ mod $p$ (either 1 or a prime) define $L_f(s,\psi),L_g(s,\overline{\psi})$  as in equation~\eqref{eq.L} and define
\begin{equation}\label{eq: completed l functions}
\begin{split}
\Lambda_f(s, \psi) = \Gamma_{\R} \left( s + \frac{1 - (-1)^k \psi(-1) \eps}{2} + \nu \right) \Gamma_{\R} \left( s + \frac{1 - \psi(-1) \eps}{2} - \nu \right) L_f(s, \psi), \\ 
\Lambda_g(s, \psi) = \Gamma_{\R} \left( s + \frac{1 - (-1)^k \psi(-1) \eps}{2} + \nu \right) \Gamma_{\R} \left( s + \frac{1 - \psi(-1) \eps}{2} - \nu \right) L_g(s, \psi). 
\end{split}
\end{equation}
Assume the following:
\begin{enumerate}
\item For $\Re(s)>\sigma+1$, the functions $L_f(s)$ and $L_g(s)$ have an Euler product expansion of the form 
\begin{equation}\label{eq.EPfg} 
\begin{split}
L_f(s) = \prod_{p \nmid N} \left(1 - a_p p^{-s} + p^{-2s}\right)^{-1}\prod_{\substack{p \mid N\\p^2\nmid N}} \left( 1 - p^{-s} \right)^{-1} ,\\
L_g(s) = \prod_{p \nmid N} \left(1 - b_p p^{-s} + p^{-2s}\right)^{-1}\prod_{\substack{p \mid N\\ p^2\nmid N}} \left( 1 - p^{-s} \right)^{-1} .
\end{split}
\end{equation} 
\item The functions $\Lambda_f(s)$ and $\Lambda_g(s)$ admit extensions to all of $\C$ which are EBV and satisfy the functional equation~\eqref{eq.FE1}.
\item Given a generating set $\left\{\begin{pmatrix}A_j&B_j\\C_jN&D_j\end{pmatrix}\in\Gamma_1(N):j=1,\dots,h\right\}$ for $\Gamma_1(N)$, there is a prime $q$ satisfying
\begin{equation}\label{eq.specialmodulus}
q\equiv A_j~(\text{mod }q|C_j|),~j=1,\dots,h,
\end{equation} 
such that for all primitive Dirichlet characters $\psi$ mod $q$ the functions $\Lambda_f(s, \psi)$ and $\Lambda_g(s, \overline{\psi})$ admit extensions which are EBV and satisfy the functional equation:
\begin{equation}\label{eq.FEtwist}
\Lambda_f(s, \psi) = \varepsilon^{1-k}\psi(N) \chi(q) \frac{\tau(\psi)}{\tau(\overline{\psi})} (q^2N)^{\frac{1}{2} - s} \Lambda_g(1 - s, \overline{\psi}).
\end{equation}
\end{enumerate}
Then $\Lambda_f(s,\psi)$ and $\Lambda_g(s,\overline{\psi})$ satisfy functional equation~\eqref{eq.TwistedFE} for all primitive Dirichlet characters $\psi$ mod $p\in\mathcal{P}$.
\end{theorem}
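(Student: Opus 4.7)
The proof will follow the strategy of \cite{DPZ} for holomorphic modular forms, adapted to the real-analytic setting as in \cite{TCTMF} (which treats $k=0$) and extended here to handle $k\in\{0,1\}$ uniformly. The architecture is analytic $\Rightarrow$ modular $\Rightarrow$ analytic. Attach to $(a_n),(b_n)$ functions $f,g:\uhp\to\C$ built from Whittaker functions essentially as in \eqref{eq.f0}--\eqref{eq.deffg}, with the Whittaker parameters adjusted to reflect the weight $k$. Inverse Mellin inversion converts the untwisted FE \eqref{eq.FE1} into an involution identity of the form $f(z)=j_k(z)\,g(-1/(Nz))$ for an explicit weight-$k$ automorphy factor $j_k(z)$. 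Weil's pairing of multiplicative twists against additive cosets via Gauss sums then converts the single twisted FE \eqref{eq.FEtwist} (summed over all primitive $\psi$ mod $q$) into a family of modular transformations of $f$ under matrices with lower-left entry divisible by $qN$, parametrised by residues $a\in(\Z/q\Z)^{\times}$.

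The Euler product hypothesis \eqref{eq.EPfg} forces Hecke multiplicativity of the coefficients at primes not dividing $N$, which is precisely what is needed so that additive twists of $f$ by rationals with denominator $q$ compose compatibly with integer translations and with the Fricke involution. Combining this compatibility with the congruence $q\equiv A_j\pmod{q|C_j|}$, one decomposes each generator $\begin{pmatrix}A_j & B_j \\ C_j N & D_j\end{pmatrix}$ of $\Gamma_1(N)$ as a product of three available building blocks: the unit translation $z\mapsto z+1$, the Fricke involution $z\mapsto-1/(Nz)$, and the modular transformation attached to an additive twist modulo $q$ produced in the first stage. This promotes $f$ to a weight-$k$ Maass form on $\Gamma_1(N)$, the nebentypus $\chi$ being inherited from the root number already encoded in \eqref{eq.FE1}.

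Once $f$ is automorphic, the direct direction of Weil's converse theorem yields the twisted functional equation at every prime $p\in\mathcal{P}$: for each primitive $\psi$ mod $p$, integrate $f$ against the additive characters $e(ax/p)$ for $a\in(\Z/p\Z)^{\times}$, assemble via $\tau(\psi)$, and apply the Fricke involution to recover $\Lambda_f(s,\psi)$ together with \eqref{eq.TwistedFE}. The prefactor $\psi(N)\chi(p)\tau(\psi)/\tau(\overline{\psi})$ then emerges naturally from the interaction of the Fricke involution with the additive twist and the nebentypus.

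The main obstacle is the matrix factorisation in the middle stage: verifying that a product of the twisted-FE matrices from the first stage, integer translations, and the Fricke involution equals the given generator $\gamma_j$ of $\Gamma_1(N)$, with the congruences \eqref{eq.specialmodulus} calibrated precisely so that this factorisation exists. This combinatorial step is the heart of the method of \cite{DPZ}. For $k=1$ there is an additional bookkeeping task not present in the case $k=0$ treated in \cite{TCTMF}: one must propagate the weight-$1$ automorphy factor $(cz+d)/|cz+d|$ through the product and confirm that the resulting cocycle on $\Gamma_1(N)$ is compatible with $\chi$; this is also where the sign $\varepsilon^{1-k}$ appearing in \eqref{eq.FEtwist} must reconcile with the sign of the untwisted FE \eqref{eq.FE1}.
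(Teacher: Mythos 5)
Your outline follows essentially the same route as the paper: the twisted functional equations modulo the single prime $q$ are converted, via Gauss sums and Mellin inversion, into the group-ring identity of equation \eqref{eq.identity} (for $k=1$; the $k=0$ case is quoted from \cite{TCTMF}), and the congruences \eqref{eq.specialmodulus} are then fed into the matrix-decomposition argument of \cite{DPZ} to obtain modularity and hence the remaining twisted functional equations at all $p\in\mathcal{P}$. One small slip: the nebentypus $\chi$ is encoded in the factor $\chi(q)$ of the twisted functional equation \eqref{eq.FEtwist}, not in the untwisted equation \eqref{eq.FE1}.
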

An upper bound for the smallest prime $q$ satisfying equation~\eqref{eq.specialmodulus} is given in \cite[Section~3]{DPZ}. 
\begin{theorem}\label{thm.CFMaass}
Let $a_n$, $\sigma$ be as in Theorem~\ref{thm:WeilMaass}, and let $\nu\in\mathbb{C}$. Given $k=0$ (resp. $k=1$), let $N\in\mathbb{Z}_{>0}$ satisfy $1\leq N\leq 12$ and $N\in\{16,18\}$ (resp. $1\leq N\leq 12$, $14\leq N\leq18$ and $N\in\{20,23,24\}$), and let $\chi$ be a Dirichlet character mod $N$. For $\Re(s)>\sigma+1$, define $L_f(s)$ as in equation~\eqref{eq.L} and suppose that $L_f(s)$ has an Euler product as in equation~\eqref{eq.EPfg}. Define $\Lambda_f(s)$ as in equation \eqref{eq: completed l functions}, and suppose that $\Lambda_f(s)$ admits an extension to $\mathbb{C}$ which is EBV and satisfies the functional equation
\begin{equation}
\Lambda_f(s)=N^{\frac12-s}\Lambda_f(1-s). 
\end{equation}
Then $f$ is a weight-$k$ Maass form for $\Gamma_0(N)$.
\end{theorem}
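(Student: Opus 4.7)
The plan is to define $f$ via the series in~\eqref{eq.deffg}, taking $b_n = a_n$ and $g = f$ (so the self-dual functional equation matches the setup), and then establish the automorphy law $f|_k \gamma = f$ for every $\gamma \in \Gamma_0(N)$. Smoothness, the eigenvalue equation $\Delta_k f = (1/4 - \nu^2)f$, and polynomial growth at the cusp $\infty$ follow directly from the series definition, the bound $a_n = O(n^\sigma)$, and the Whittaker ODE together with~\eqref{eq.WhittakerGrowth}; polynomial growth at the remaining cusps is then deduced once the $\Gamma_0(N)$-automorphy is in hand and the Fricke-transformed expansion at $0$ is known.

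The key step is to assemble enough ``special matrices'' that fix $f$. The Fourier expansion~\eqref{eq.ftilde} immediately gives invariance under the translation $T = \begin{pmatrix}1 & 1 \\ 0 & 1\end{pmatrix}$. The untwisted functional equation, combined with a Mellin-inversion ``two circles'' argument extending \cite[Theorem~3.1]{TCTMF} from $k=0$ to $k=1$, yields the Fricke relation $f|_k W_N = f$ with $W_N = \begin{pmatrix}0 & -1 \\ N & 0\end{pmatrix}$. Finally, the Euler product~\eqref{eq.EPfg} contributes an additional invariance for each prime $p$ with $p \mid N$ and $p^2 \nmid N$: the local factor $(1 - p^{-s})^{-1}$ is of Steinberg shape, and when combined with the untwisted functional equation it determines an Atkin--Lehner sign at $p$ and forces $f$ to be invariant (up to this sign) under a matrix of the form $\frac{1}{\sqrt{p}}\begin{pmatrix}p\alpha & \beta \\ N\gamma & p\delta\end{pmatrix}$ with $p\alpha \cdot p\delta - N\gamma \beta = p$.

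For each $N$ in the stated list, I would then exhibit an explicit generating set of $\Gamma_0(N)$ in which every generator is written as a product of the special matrices $T$, $W_N$, and the $W_p$ for $p \mid N$ with $p^2 \nmid N$. Intermediate factors may have non-integer entries (coming from the $\frac{1}{\sqrt{N}}$ and $\frac{1}{\sqrt{p}}$ normalizations needed to land in $\SL_2(\R)$), but the overall composed generators lie in $\Gamma_0(N)$. Since $f$ is invariant under each factor, it is invariant under each generator, and hence under all of $\Gamma_0(N)$. The main obstacle is precisely this combinatorial step: the admissible list of $N$ is exactly the set of levels for which such generator-decompositions can be arranged, and the search is finite but case-by-case, carried out with computer assistance using Farey symbols or coset enumeration. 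This explains why the smallest new cases relative to the Conrey--Farmer range are $N \in \{18, 20, 24\}$ (the smallest multiples of $3^2$ and of $2^2$ not already treated), and why the admissible lists differ between $k = 0$ and $k = 1$: the differing automorphy factors for $W_N$ and the $W_p$ change which sign-combinations close up consistently across a proposed generating set.
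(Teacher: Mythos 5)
There is a genuine gap in your strategy. Your toolkit consists of the translation $P_1$, the Fricke matrix $H_N$, and putative Atkin--Lehner matrices at primes $p\,\|\,N$, together with the plan of writing every generator of $\Gamma_0(N)$ as a \emph{product} of such matrices. That is not enough for most of the levels in the statement, and it is not how the paper proceeds. The Euler product is exploited not through multiplicative invariances but through \emph{additive} relations in the group ring $\C[\GL_2^+(\R)]$ acting via $|_k$: for $p\nmid N$ one gets $a_p-T_p\equiv 0$ (equation~\eqref{eq.apTp}), and for $p\mid N$ one gets $U_p\equiv\sabcd{p}{0}{0}{1}$ if $p^2\nmid N$ and $U_p\equiv 0$ if $p^2\mid N$. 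These are sums of translates $P_{a/p}$ with non-integral $a/p$, and they cannot be recast as a single matrix fixing $f$; e.g.\ the $N=18$ case hinges on $P_{1/3}+P_{-1/3}\equiv-1$ (from $3^2\mid 18$), and the $k=1$ cases $N=20,24$ hinge on the relations \eqref{eq.P13}, \eqref{eq.P15} coming from $T_3$ and $T_5$ at primes \emph{not} dividing $N$ --- which your proposal never invokes. Moreover, even with these relations, several generators (the matrices $A,B,C,D$ in the paper, and the key matrix for $N=11$) are not obtained as products of special matrices at all: one only reaches an identity of the shape $(1-\gamma)\equiv(1-\gamma)E$ for an explicit infinite-order elliptic $E\in\SL_2(\R)$, and one then needs a rigidity statement --- Lemma~\ref{prop:2circles} (two elliptic elements with distinct fixed points, for $k=0$) or Lemma~\ref{prop: one circle} (a single infinite-order elliptic element, for $k=1$) --- to conclude $\gamma\equiv 1$. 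Without these lemmas your argument cannot close for $N=11$ or for $k=1$, $N\in\{14,15,17,20,23,24\}$.

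This also shows that your explanation of why the admissible lists differ between $k=0$ and $k=1$ is incorrect. The discrepancy has nothing to do with sign combinatorics of Atkin--Lehner factors: it is that in weight $1$ a single infinite-order elliptic stabilizer already forces a continuous function to vanish (Lemma~\ref{prop: one circle}), whereas in weight $0$ one must produce \emph{two} commuting constraints with distinct elliptic fixed points (Lemma~\ref{prop:2circles}), a strictly harder arrangement; hence the $k=1$ list is longer. The parts of your proposal that are sound --- translation invariance from the Fourier expansion, the Fricke relation $f=i^k\,f|_kH_N$ from Mellin inversion of the untwisted functional equation, and growth at the cusps from convergence of $L_f$ in a right half-plane --- do match the paper, but they only settle $1\leq N\leq 4$ outright; everything beyond that requires the group-ring Hecke relations and the elliptic rigidity lemmas that are absent from your outline.
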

\begin{theorem}\label{thm.Holomorphic}
Let $a_n$, $\sigma$ be as in Theorem~\ref{thm:WeilMaass}, let $k\in\mathbb{Z}_{>0}$ be an even integer, and $N\in\{18,20,24\}$. For $\Re(s)>\sigma+1$, define $L_f(s)$ by equation~\eqref{eq.L}. Suppose that $L_f(s)$ has an Euler product expansion 
\begin{equation}\label{eq.EPf} 
L_f(s) = \prod_{p \nmid N} \left(1 - a_p p^{-s} + p^{k+1-2s}\right)^{-1}\prod_{\substack{p \mid N\\p^2\nmid N}} \left( 1 - p^{k/2-1-s} \right)^{-1}.
\end{equation} 
Define
\begin{equation}\label{eq.holcompletions}
\Lambda_f(s)=\Gamma_{\C}(s)L_f(s),
\end{equation}
and suppose that $\Lambda_f(s)$ admits an extensions to $\C$ which is EBV and satisfies the functional equation:
\begin{equation}\label{eq.holFE}
\Lambda_f(s)=(-1)^{k/2}N^{\frac{k}{2}-s}\Lambda_f(k-s)
\end{equation}
If,
\begin{equation}\label{eq.holexp}
f(z)=\sum_{n=1}^{\infty}a_n\exp(2\pi inz),
\end{equation}
then $f$ is a weight-$k$ modular form on $\Gamma_0(N)$.
\end{theorem}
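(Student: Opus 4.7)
The strategy is to adapt the Hecke--Weil converse theorem to the three new levels $N \in \{18,20,24\}$ by combining standard Mellin inversion (which extracts all transformation laws directly implied by the functional equation) with a case-by-case factorisation of a generating set of $\Gamma_0(N)$ into products of matrices whose slash-action on $f$ is already understood.

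First, construct $f$ from \eqref{eq.holexp}; the bound $a_n=O(n^\sigma)$ gives holomorphy on $\mathcal{H}$, moderate growth towards the cusps, and the automatic periodicity $f(z+1)=f(z)$. A standard Mellin inversion argument, applied to $\Lambda_f(s)$ along a suitable vertical line and justified by the EBV hypothesis, together with the functional equation \eqref{eq.holFE}, yields
\begin{equation*}
f\!\left(-\tfrac{1}{Nz}\right) = (-1)^{k/2} N^{k/2} z^{k} f(z),
\end{equation*}
i.e.\ $f\mid_k W_N = (-1)^{k/2} f$ for $W_N = \sabcd{0}{-1}{N}{0}$. Hence $f$ transforms correctly under the subgroup of $\GL_2(\R)^+$ generated by $T = \sabcd{1}{1}{0}{1}$ and $W_N$.

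Second, the degree-one Euler factor $(1 - p^{k/2-1-s})^{-1}$ appearing in \eqref{eq.EPf} at each prime $p$ with $p\|N$ forces the multiplicative recursion $a_{p^j n} = p^{j(k/2-1)} a_n$ whenever $(n,p)=1$; structurally this is equivalent to an Atkin--Lehner-type symmetry $f\mid_k W_p = \pm f$, where $W_p$ is a suitable integral matrix of determinant $p$ in the normaliser of $\Gamma_0(N)$. For $N=18,20,24$ we thereby acquire $W_2$, $W_5$, $W_3$ respectively as additional symmetries of $f$. Third --- and this is the crux of the argument --- we exhibit for each such $N$ an explicit generating set $\{\gamma_1,\ldots,\gamma_h\}\subset \Gamma_0(N)$ and decompose every $\gamma_j$ as a product of matrices drawn from $\{T, W_N, W_p\}$, \emph{allowing intermediate factors to have non-integer rational entries} so long as the overall product is integral and lies in $\Gamma_0(N)$. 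Since the slash-action of every factor on $f$ is already known, so is that of each $\gamma_j$, and a direct cocycle check confirms $f\mid_k \gamma_j = f$. Combined with moderate growth, this gives that $f$ is a weight-$k$ holomorphic modular form on $\Gamma_0(N)$.

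The main obstacle lies in the third step. For $N\in\{18,20,24\}$ no factorisation of a generating set into $\{T,W_N,W_p\}$-words exists if one insists on staying inside $\GL_2(\Z)$ at every intermediate step, which is precisely why Conrey--Farmer and \cite{EOHCT,CFOS} could not reach these levels. The decisive move, foreshadowed in the introduction, is to permit rational non-integral intermediate factors and verify integrality only of the final product; this turns the task into a finite computer-assisted search through short words in $\GL_2(\Q)^+$, which succeeds for precisely $N=18,20,24$ among the small multiples of $4$ or $9$ not previously handled. The case analysis for the three levels is independent and constitutes the bulk of the technical work.
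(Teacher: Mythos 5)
Your overall architecture --- Mellin inversion to get $f|^kH_N=(-1)^{k/2}f$, then a case-by-case decomposition of a generating set of $\Gamma_0(N)$ into special matrices, allowing non-integral rational intermediate factors --- is the right skeleton and matches the paper. But the core of your third step, the claim that every generator of $\Gamma_0(N)$ factors as a word in $\{T,W_N,W_p\}$ with $W_p$ an Atkin--Lehner-type matrix at the prime $p\|N$, is not correct, and the proof does not go through with that alphabet. Two separate ingredients are missing. First, you have misidentified where the Euler product actually gives leverage at these levels. The novelty of $N\in\{18,20,24\}$ is precisely that $4\mid N$ or $9\mid N$, and for a prime with $p^2\mid N$ the hypothesis \eqref{eq.EPf} has \emph{no} local factor at $p$, which forces $f|^kU_p=0$, i.e.\ $\sum_{a=0}^{p-1}P_{a/p}\equiv 0$ in the notation of the paper. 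This is what yields the relations $P_{1/2}\equiv-1$ (hence $L_N\equiv 1$ for $4\mid N$) and $P_{1/3}+P_{-1/3}\equiv-1$ (for $9\mid 18$), and these, not an Atkin--Lehner involution at $5\|20$ or $3\|24$, are what the decompositions of the generators actually use. Your assertion that the degree-one factor at $p\|N$ is ``structurally equivalent'' to $f|_kW_p=\pm f$ is itself unproved (classically that equivalence presupposes modularity), and in the one case where something of this kind is used ($2\|18$, giving $J_{18}\equiv-1$) it requires a separate argument (Lemma~3 of \cite{EOHCT}), not a formal consequence of the recursion on the $a_{p^j}$.

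Second, and more seriously, no factorisation of a full generating set into matrices under which $f$ is already known to transform exists for $N=20$ or $N=24$: the generators $C=\sabcd{13}{-2}{20}{-3}$ and $D=\sabcd{19}{-4}{24}{-5}$ cannot be disposed of this way. The paper handles them by combining the Hecke relation $a_p\equiv T_p$ at an auxiliary prime $p\nmid N$ ($p=3$ for $N=20$, $p=5$ for $N=24$) with the $U_2\equiv 0$ relations to arrive at an identity of the form $1-C\equiv(1-C)E$ with $E$ elliptic of infinite order, and then invokes Weil's lemma (Lemma~5 of \cite{EOHCT}: a holomorphic function fixed by an infinite-order elliptic element is constant) to conclude $f|^kC=f$. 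Your proposal uses neither the Hecke operators at primes away from $N$ nor any elliptic/Weil-lemma step, so the ``direct cocycle check'' you describe would terminate with $C$ and $D$ unresolved; the finite search over words in $\{T,W_N,W_p\}$ you propose would simply fail for these two levels.
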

The conclusion of Theorem~\ref{thm.Holomorphic} is weaker than that in \cite{EOHCT}, in which $f$ is shown the be cuspidal. In both Theorems~\ref{thm.CFMaass} and~\ref{thm.Holomorphic}, cuspidality of $f$ would be a consequence of the convergence of $L_f(s)$ in $\Re(s)>1-\delta$ for any $\delta>0$. For a given $N$, this assumption may or may not be necessary (cf. \cite[Section~5]{EOHCT}). 
\section{Proofs}
\subsection{Weil's Lemma}
Let $\GL_2^+(\R)$ denote the subgroup of $\GL_2(\mathbb{R})$ consisting of matrices with positive determinant, which contains $\SL_2(\R)$ as a subgroup. For $k\in\Z_{\geq0}$, the weight-$k$ action of $\gamma = \begin{pmatrix}a & b \\ c & d \end{pmatrix}\in\text{GL}_2^+(\R)$ on $f:\uhp\rightarrow\C$ is given by
\begin{equation}\label{eq.weightkaction}
(f |_k \gamma)(z) = \exp\left(-ik\arg(cz+d)\right)f \left( \frac{az + b}{cz + d} \right).
\end{equation}
For $r \in \Q$ write 
\begin{equation}
P_r = 
\begin{pmatrix} 
1 & r \\ 0 & 1 
\end{pmatrix}
\in\mathrm{SL}_2(\R).
\end{equation} 
For $N\in\mathbb{Z}_{>0}$ set
\begin{equation}\label{eq.FrickeMatrix}
H_N = \begin{pmatrix} 0 & -1 \\ N & 0 \end{pmatrix}\in\GL_2^+(\R).
\end{equation} 
Equation~\eqref{eq.FE1} is equivalent to
\begin{equation}\label{eq.FEslash}
g = i^k \ f |_k H_N.
\end{equation}
Extending~\eqref{eq.weightkaction} by linearity, we equip $C^{\infty}(\uhp)$ with a right $\C[\text{GL}_2^+(\R)]$-module structure. For $f\in C^{\infty}(\uhp)$, we introduce the following right-ideal of $\C [\text{GL}_2^+(\R)]$:
\begin{equation}\label{eq.AnnIdeal}
\Omega_{f} = \{W \in \C [\text{GL}_2^+(\R)] : f|_k W = 0\}.
\end{equation}
For elements $\gamma_1, \gamma_2$ of $\C[\text{GL}_2^+(\R)]$ we will write $\gamma_1 \equiv \gamma_2$ to mean $\gamma_1 - \gamma_2 \in \Omega_f$. If $f$ has an expansion as in equation~\eqref{eq.deffg}, then
\begin{equation}\label{eq.periodic}
1-P_r\equiv0,\ \ r\in\mathbb{Z}.
\end{equation}
For a prime number $p$, recall the Hecke operator
\begin{equation}
T_p = \frac{1}{\sqrt{p}}\left(\begin{pmatrix}p&0\\0&1\end{pmatrix}+\sum_{a = 0}^{p-1} \begin{pmatrix} 1 & a \\ 0 & p \end{pmatrix}\right)\in\C[\GL^+_2(\R)],
\end{equation}
and the operator
\begin{equation}
U_p=\sum_{a = 0}^{p-1} \begin{pmatrix} p & a \\ 0 & p \end{pmatrix}\in\C[\GL^+_2(\R)].
\end{equation} 
If $L_f(s)$ has an Euler product as in equation~\eqref{eq.EPfg}, then, for $p\nmid N$,
\begin{equation}\label{eq.apTp}
a_p-T_p\equiv0.
\end{equation}
On the other hand, if $p|N$, then
\begin{equation}
U_p\equiv\begin{cases} \begin{pmatrix} p& 0 \\ 0&1 \end{pmatrix}, & p^2\nmid N, \\ 0, & p^2|N. \end{cases}
\end{equation}
We say a matrix $E\in\SL_2(\mathbb{R})$ is elliptic if it has a unique fixed point in $\uhp$. 
\begin{lemma}\label{prop:2circles}
Let $k=0$ and let $c:\uhp\rightarrow\C$ be a continuous function. Say there exist $E_1,E_2\in\mathrm{SL}_2(\R)$ such that $E_1$ is an elliptic matrix of infinite order with fixed point $a\in\uhp$, $a$ is not a fixed point of $E_2$, and $c|_0E_1=c|_0E_2=c$, then $c$ is constant on $\mathcal{H}$. 
\end{lemma}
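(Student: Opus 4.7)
The plan is to use that for $k=0$ the slash action is simply $c\mapsto c\circ\gamma$, so the set
\[
G=\{\gamma\in\SL_2(\R):c|_0\gamma=c\}
\]
is a subgroup of $\SL_2(\R)$. By hypothesis $E_1,E_2\in G$, and hence so is the conjugate $E_2E_1E_2^{-1}$. This conjugate is again elliptic (its trace equals that of $E_1$) of infinite order, and fixes $b:=E_2(a)$, which is distinct from $a$ precisely because $a$ is not a fixed point of $E_2$. Thus I have produced two elliptic elements of $G$ of infinite order with distinct fixed points $a,b\in\uhp$, and the problem is now symmetric in $a$ and $b$.

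The first substantive step is to show that $c$ is constant on every hyperbolic circle centered at $a$. After conjugating by a fixed $\gamma\in\SL_2(\R)$ sending $a$ to $i$ (which replaces $c$ by $c\circ\gamma^{-1}$ and $E_1$ by $\gamma E_1\gamma^{-1}$, preserving all hypotheses), the stabilizer of $i$ is $\mathrm{SO}(2)$, which acts on $\uhp$ via the Cayley transform as rigid rotations of the unit disc about its centre. Under this identification $E_1$ is a rotation by some angle $\theta$, and $\theta/2\pi\notin\Q$ since $E_1$ has infinite order. Hence the orbit of any $z\neq a$ under $\langle E_1\rangle$ is dense in the hyperbolic circle through $z$ about $a$; combined with $c\circ E_1^n=c$ for all $n\in\Z$ and the continuity of $c$, this forces $c$ to be constant on that circle. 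Running the same argument with $E_2E_1E_2^{-1}$ shows $c$ is also constant on every hyperbolic circle centered at $b$.

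Finally I combine the two families of invariances. Let $d(\cdot,\cdot)$ denote hyperbolic distance and set $D=d(a,b)>0$. By the previous step there exist continuous $\phi,\psi:[0,\infty)\to\C$ with $c(z)=\phi(d(a,z))=\psi(d(b,z))$. Fix $r>0$ and consider the hyperbolic circle $C_r=\{z\in\uhp:d(a,z)=r\}$; it is connected, and the continuous function $z\mapsto d(b,z)$ attains the extremal values $|r-D|$ and $r+D$ at the two points of $C_r$ on the geodesic through $a$ and $b$, so by the intermediate value theorem its image on $C_r$ is exactly $[|r-D|,r+D]$. Since $\psi\circ d(b,\cdot)$ equals the constant $\phi(r)$ on $C_r$, the function $\psi$ is constant on $[|r-D|,r+D]$. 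Taking $r=D,2D,3D,\ldots$ yields overlapping intervals $[0,2D],[D,3D],[2D,4D],\ldots$ whose union is $[0,\infty)$, so $\psi$ is constant on $[0,\infty)$ and therefore $c$ is constant on $\uhp$.

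The only step with any subtlety is the density-of-orbits assertion: one must recognise an elliptic element of $\SL_2(\R)$ as conjugate to a rotation of the disc model and then invoke the classical minimality of irrational rotations. After that, the ``two circles'' combination is an elementary exercise in hyperbolic triangle geometry and continuity, and no analytic input beyond continuity of $c$ is required.
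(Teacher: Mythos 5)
Your proof is correct. The opening reduction is identical to the paper's: both you and the authors conjugate to obtain $E_3=E_2E_1E_2^{-1}$, observe that it is elliptic of infinite order with fixed point $E_2a\neq a$, and thereby reduce to the statement that a continuous function invariant under two infinite-order elliptic elements with distinct fixed points is constant. The difference is that at this point the paper simply cites that statement as Theorem~3.10 of the reference \cite{TCTMF}, whereas you prove it from scratch: conjugating each elliptic element into $\mathrm{SO}(2)$, identifying an infinite-order elliptic matrix with an irrational rotation of the disc (your bookkeeping here is right: an infinite-order matrix in $\mathrm{SO}(2)$ acts on $\mathcal{D}$ as rotation by an irrational multiple of $2\pi$), using minimality of irrational rotations plus continuity to get constancy on hyperbolic circles about each of the two centres, and then propagating outward via the intermediate value theorem and the overlapping intervals $[(k-1)D,(k+1)D]$. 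This last step is clean and complete --- note that you do not even need continuity of $\psi$, only that $d(b,\cdot)$ restricted to the connected circle $C_r$ surjects onto $[\,|r-D|,r+D\,]$. So your argument buys a self-contained proof of the ``two circles'' input that the paper treats as a black box, at the cost of a page of elementary hyperbolic geometry; the paper's version is shorter only because it defers exactly this content to the earlier reference.
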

\begin{proof}
Since $E_1$ is elliptic, we know that $|\mathrm{tr}(E_1)|=|\mathrm{tr}(E_2E_1E_2^{-1})|<2$. Subsequently, we deduce that $E_3=E_2E_1E_2^{-1}$ is an elliptic matrix. Since $E_1$ has infinite order, so does $E_3$. Moreover, $E_3$ fixes $E_2a\neq a$ by assumption. Therefore $c$ is preserved by the two infinite order elliptic matrices with distinct fixed points and hence the result follows from \cite[Theorem~3.10]{TCTMF}.
\end{proof}
\begin{lemma} \label{prop: one circle}
Let $k=1$ and let $c: \uhp \rightarrow \C$ be a continuous function. If there exists an elliptic matrix $E$ of infinite order such that $c|_1E = c$, then $c = 0$. 
\end{lemma}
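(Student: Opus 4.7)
The plan is to exploit the weight-$1$ automorphy factor, which supplies an extra piece of information relative to the $k=0$ situation of Lemma~\ref{prop:2circles} and makes a single elliptic element sufficient. I will work in a disk model centered at the unique fixed point $z_0$ of $E$, where $E$ becomes a rotation; the weight-$1$ cocycle will contribute a half-integer offset to the Fourier expansion about $z_0$, and this offset will preclude any integer Fourier mode from surviving when the rotation angle is irrational.

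First I would reduce to the case where $E$ fixes $i$ and lies in $\mathrm{SO}(2)$. Choose $g\in\SL_2(\R)$ with $g\cdot i = z_0$ and set $\tilde c = c|_1 g$, $E' = g^{-1}Eg$. For integer weight the factor $\exp(-ik\arg(cz+d))$ is single-valued, so the slash action obeys the strict cocycle identity $(c|_1\gamma_1)|_1\gamma_2 = c|_1(\gamma_1\gamma_2)$, from which $\tilde c|_1 E' = \tilde c$ follows. Since $E'$ fixes $i$, it equals $R_\theta = \sabcd{\cos\theta}{\sin\theta}{-\sin\theta}{\cos\theta}$ for some $\theta\in\R$, and because $E$ (hence $E'$) has infinite order in $\mathrm{PSL}_2(\R)$, $\theta/\pi$ must be irrational.

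Next, I would pass to the unit disk via the Cayley transform $w=(z-i)/(z+i)$. Under this identification $R_\theta$ acts as the rotation $w\mapsto e^{2i\theta}w$, and a direct calculation of the transported cocycle (equivalently, expressing $R_\theta$ as the $SU(1,1)$ element $\mathrm{diag}(e^{i\theta},e^{-i\theta})$ with automorphy factor $e^{-i\theta}$) shows that the weight-$1$ invariance $\tilde c|_1 R_\theta = \tilde c$ becomes, for the transported function $h$,
\[
h(e^{2i\theta}w) = e^{-i\theta} h(w).
\]
I would then expand $h$ in a Fourier series along each concentric circle, $h(\rho e^{i\psi}) = \sum_{m\in\Z} h_m(\rho) e^{im\psi}$, and match coefficients to obtain, for every $m\in\Z$ and every $\rho\in(0,1)$,
\[
h_m(\rho)\bigl(e^{i\theta(2m+1)}-1\bigr) = 0.
\]
Since $2m+1$ is a nonzero integer and $\theta/\pi$ is irrational, the second factor never vanishes, so $h_m\equiv 0$. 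Hence $h\equiv 0$ on the disk, and unwinding the conjugation and Cayley transform gives $c\equiv 0$ on $\uhp$.

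The main obstacle will be the cocycle bookkeeping in the Cayley-transform step: one must verify that the weight-$1$ slash action on $\uhp$ really does transport to the clean multiplier $e^{-i\theta}$ in disk coordinates for $R_\theta$. Once this is settled, the irrationality of $\theta/\pi$ combined with the odd-integer factor $2m+1$—a direct consequence of $k=1$—forces every Fourier mode to vanish. This mechanism is exactly what fails when $k=0$: in that case one instead obtains $e^{2im\theta}=1$, which is satisfied at $m=0$, leaving the radial Fourier mode undetermined, and this is precisely why Lemma~\ref{prop:2circles} must invoke a second elliptic element to eliminate that mode.
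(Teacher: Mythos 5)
Your proposal is correct and takes essentially the same route as the paper: conjugate $E$ to a rotation of the unit disc via a Cayley transform, observe that the weight-$1$ automorphy factor transports to the odd character $e^{\pm i\theta}$ of that rotation, and use the irrationality of $\theta/\pi$ (forced by the infinite order of $E$) to conclude vanishing. The only divergence is the final step: where you expand in Fourier modes along concentric circles and note $e^{i(2m+1)\theta}\neq 1$ for every $m\in\Z$, the paper uses density of the orbit $\{e^{2\pi i m\theta}\}$ in $S^1$ plus continuity to upgrade the relation to all $\omega\in S^1$ and then sets $\omega=-1$ to get $\tilde c=-\tilde c$ directly; both correctly exploit the same mechanism, namely that the weight-$1$ multiplier is odd under the rotation while every Fourier mode is even.
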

\begin{proof}
Let $a\in\mathcal{H}$ denote the unique fixed point of $E$. The elliptic matrix $E$ is diagonalised by the Cayley transform:
\[
K = \frac{1}{\sqrt{a - \overline{a}}} \begin{pmatrix} 1 & -a \\ 1 & -\overline{a} \end{pmatrix}\in\GL_2(\mathbb{C}), 
\]
which takes $\mathcal{H}$ to the open unit disc $\mathcal{D}$ and maps the point $a$ to $0$. Because $E$ is of infinite order, we have
\[
KEK ^{-1}=\begin{pmatrix} e^{i \pi \theta} & 0 \\ 0 & e^{-i \pi \theta} \end{pmatrix},\ \ \theta\in\mathbb{R}\backslash\mathbb{Q}.
\]
Although $K$ is not a matrix in $\GL_2^+(\R)$, we define the function $\tilde{c}=\left(c|_1K\right):\mathcal{D}\rightarrow\mathbb{C}$ by equation~\eqref{eq.weightkaction} with $k=1$ and $\gamma=K$. For all $z \in \D$ and $m \in \Z$ we have 
\[
\tilde{c}(z) = e^{\pi i m \theta} \tilde{c}(e^{2 \pi i m \theta} z).
\] 
Denote by $S^1$ the unit circle. Fix $z \in \D$ and consider the continuous function 
\[S^1 \rightarrow \C\]
\[\omega \mapsto  \omega \tilde{c}(\omega^2 z).\] 
Given two topological spaces $X$ and $Y$, if $Y$ is Hausdorff and $c_1, \ c_2 : X \rightarrow Y$ are continuous functions which agree on some dense subset of $X$ then $c_1 = c_2$. In particular, because $\theta$ is irrational, we deduce the following for all $\omega\in S^1$:
\[
\tilde{c}(z) = \omega \tilde{c}(\omega^2 z).
\]
Taking $\omega = -1$ gives $\tilde{c}(z) = - \tilde{c}(z)$ for all $z \in \D$, so $\tilde{c} = 0$ and therefore $c = 0$. 
\end{proof}
\subsection{Proof of Theorem~\ref{thm:WeilMaass}}\label{WMF}
In this Section we follow certain conventions from \cite{BCK}. Let  $\sigma,\varepsilon,\nu,a_n,b_n$ be as in Theorem~\ref{thm:WeilMaass}. The Fourier expansions of $f$ and $g$ immediately imply that they are eigenfunctions of $\Delta_1$.  Given $\mathcal{P}$ be as in Theorem~\ref{thm:WeilMaass}, let $q \in \mathcal{P}$ and $a\in\mathbb{Z}$ satisfy $(a, q) = 1$ or $a=0$. Denote by $\alpha$ the quotient $a/q$. For $m\in\mathbb{Z}_{\geq0}$, define:
\begin{equation}\label{eq.additivetwists}
L_f\left(s, \alpha, \cos^{(m)}\right) = \sum_{n = 1}^{\infty} \cos^{(m)}\left(2 \pi n \alpha\right) a_n n^{-s},
\end{equation}
where $\cos^{(m)}$ the $m$-th derivative of the cosine function. We have:
\begin{multline}\label{eq: trig identities}
    L_f\left(s, \alpha, \cos^{(m)}\right) = \frac{i^m}{q - 1} \sum_{\substack{\psi~\mathrm{mod}~q\\ \psi(-1) = (-1)^m}} \tau(\overline{\psi}) \psi(a) L_f(s, \psi) \\
    + \begin{cases} (-1)^{\frac{k}{2}} \left( L_f(s) - \frac{q}{q - 1} L_f(s, \mathbf{1}_q) \right), & m \text{ even}, \\ 
0, & m\text{ odd}, \end{cases}
\end{multline}
where the summation on the right-hand side is over the set of all primitive Dirichlet characters of conductor $q$, and $\mathbf{1}_q$ denotes the trivial character modulo $q$. For $m\in\mathbb{Z}_{>0}$ we let $\langle m \rangle$ denote $+$ if $m$ is even and $-$ if $m$ is odd, and define:
\begin{equation}\label{eq.completedadditivetwists}
\Lambda_f\left(s, \alpha, \cos^{(m)}\right) = \gamma_f^{\langle m \rangle}(s) L_f\left(s, \alpha, \cos^{(m)}\right),
\end{equation} 
where 
\begin{equation}\label{eq.gammapm}
\gamma_f^{\pm}(s) =\Gamma_{\R} \left( s + \nu + \frac{1 \mp(-1)^k \varepsilon}{2} \right) \Gamma_{\R} \left( s - \nu + \frac{1 \mp \varepsilon}{2} \right).
\end{equation} 
Equation~\eqref{eq: trig identities} implies:

\begin{multline}\label{eq.completedadditivetwist}
    \Lambda_f\left(s, \alpha, \cos^{(m)}\right) = \frac{i^m}{q - 1} \sum_{\substack{\psi~\mathrm{mod}~q \\ \psi(-1) = (-1)^k}} \tau(\overline{\psi}) \psi(a) \Lambda_f(s, \psi) \\
    + \begin{cases} (-1)^{\frac{m}{2}} \left( \Lambda_f(s) - \frac{q}{q - 1} \Lambda_f(s, \mathbf{1}_q) \right), & \text{ if }m \text{ is even}, \\ 0, & \text{ if } m \text{ is odd}. \end{cases}
\end{multline}
Applying equation~\eqref{eq.TwistedFE}, we deduce:
\begin{multline}\label{eq.FEaddtwist} 
\Lambda_f\left(s, \alpha, \cos^{(m)}\right) = \frac{(-i)^m \chi(q) (q^2N)^{\frac{1}{2} - s}}{q - 1} \sum_{\substack{\psi~\mathrm{mod}~q \\ \psi(-1) = (-1)^m}} \tau(\psi) \psi(Na) \Lambda_g(1 - s, \overline{\psi}) \\
+ \begin{cases} (-1)^{\frac{m}{2}} \left( \Lambda_f(s) - \frac{q}{q - 1} \Lambda_f(s, \mathbf{1}_q) \right), & m\text{ even}, \\ 0, & m\text{ odd}. \end{cases}
\end{multline}
It follows from \cite[(6.699.2),~(6.699.3),~(9.234.1),~(9.234.2),~(9.235.2)]{integrals} that, for $\ell\in\{0,1\}$, $w\in\R$, and $\re(s)\gg0$: 
\begin{multline}\label{eq.Mellinh}
\int_0^{\infty} f(wy + iy + \alpha) y^{s - \frac{1}{2}} \ \frac{dy}{y} \\
= \sum_{n = 1}^{\infty} \frac{a_n}{n^s} \sum_{\ell \in \{ 0, 1 \}} (-i)^{\ell} \cos^{(\ell)}(2 \pi n \alpha) \left( \gamma_f^{\langle \ell \rangle}(s) \ \hyp{ \frac{s + \nu + (1 + (-1)^{\ell} \varepsilon)/2}{2} }{ \frac{s - \nu + (1 - (-1)^{\ell} \varepsilon)/2}{2} }{ \frac{1}{2} }{-w^2}\right. \\
\left.+ 2 \pi i w \gamma_f^{\langle \ell + 1 \rangle}(s + 1) \ \hyp{ \frac{s + \nu + (3 + (-1)^{\ell + 1} \varepsilon)/2}{2} }{, \frac{s - \nu + (3 - (-1)^{\ell + 1} \varepsilon)/2}{2} }{ \frac{3}{2} }{-w^2} \right) .
\end{multline}
Defining, for $w\in\R$ and $\ell \in \{0, 1 \}$,
\begin{multline}\label{eq.Hfl}
    H_f^{\ell}(s, w) = (-i)^{\ell} \left( \ \hyp{ \frac{s + \nu + (1 + (-1)^{\ell} \varepsilon)/2}{2} }{ \frac{s - \nu + (1 - (-1)^{\ell} \varepsilon)/2}{2} }{ \frac{1}{2} }{-w^2}\right. \\
    \left.+ 2 \pi i w \frac{\gamma_f^{\langle \ell + 1 \rangle}(s + 1)}{\gamma_f^{\langle \ell \rangle}(s)} \ \hyp{ \frac{s + \nu + (3 + (-1)^{\ell + 1} \varepsilon)/2}{2} }{ \frac{s - \nu + (3 - (-1)^{\ell + 1} \varepsilon)/2}{2} }{ \frac{3}{2} }{-w^2} \right),
\end{multline}
we may rewrite equation~\eqref{eq.Mellinh} as 
\begin{equation}\label{eq.Mellinh2}
\int_0^{\infty} f(wy + iy + \alpha) y^{s - \frac{1}{2}} \ \frac{dy}{y} = \sum_{\ell \in \{0, 1\}} H_f^{\ell}(s, w) \Lambda_f\left(s, \alpha, \cos^{(\ell)}\right). 
\end{equation}
Applying Mellin inversion to equation~\eqref{eq.Mellinh2} with $\alpha = 0$, we obtain for $c > 1 + |\re(\nu)|$:
\begin{equation}\label{eq.IMtf}
\tilde{f}(wy + iy)  = \frac{1}{2 \pi i} \int_{(c)} \Lambda_f(s) H^0_f(s, w) y^{\frac{1}{2} - s} \ ds, 
\end{equation}
where $(c)$ is the vertical line $\Re(s)=c$. Note that $H^0_f(s,w)$ is an entire function of $s$, and so the only possible singularities of the integrand in equation~\eqref{eq.IMtf} are simple poles at the points $\varepsilon \nu$ and $1 - \varepsilon \nu$. As a consequence of \cite[Lemma~4.1]{TCTMF}, the integrand in \eqref{eq.IMtf} decays to zero in vertical strips. Shifting the line of integration to $(1-c)$ gives by Cauchy's residue theorem:
\begin{multline} \label{eq: f tilde}
\frac{1}{2 \pi i} \int_{(1 - c)} \Lambda_f(s) H^0_f(s, w) y^{\frac{1}{2} - s} \ ds + \sum_{p \in \{ \varepsilon \nu, 1 - \varepsilon \nu \} } \Res_{s = p} \ \Lambda_f(s) H^0_f(s, w) y^{\frac{1}{2} - s} \\
= \frac{1}{2 \pi i} \int_{(c)} \Lambda_f(1 - s) H^0_f(1 - s, w) y^{s - \frac{1}{2}} \ ds + \sum_{p \in \{ \varepsilon \nu, 1 - \varepsilon \nu \} } \Res_{s = p} \ \Lambda_f(s) H^0_f(s, w) y^{\frac{1}{2} - s}.
\end{multline}
Using the identities
\begin{equation}\label{eq.Hfidentity}
H^0_f(1 - s, w) = i \left( \frac{|w + i|}{w + i} \right) (1 + w^2)^{s - \frac{1}{2}} H^0_f(s, -w), \ \ H^0_f = H^0_g,
\end{equation}
and applying equation \eqref{eq.TwistedFE}, we compute 
\begin{multline}
    \tilde{f}(wy + iy) - R(z)     = i \left( \frac{|w + i|}{w + i} \right) \cdot \frac{1}{2 \pi i} \int_{(c)} (N(1 + w^2)y)^{s - \frac{1}{2}} \Lambda_g(s) H^0_g(s, -w) \ ds \\   
    = i \left( \frac{|w + i|}{w + i} \right) \tilde{g} \left( -\frac{w}{N(w^2 + 1)y} + \frac{i}{N(w^2 + 1)y} \right)
   = i \left( \frac{|w + i|}{w + i} \right) \tilde{g} \left( -\frac{1}{N(wy + iy)} \right),
\end{multline}
where 
\begin{multline}
R(z) = \sum_{p \in \{ \varepsilon \nu, 1 - \varepsilon \nu \} } \Res_{s = p} \ \Lambda_f(s) H^0_f(s, w) y^{\frac{1}{2} - s}\\
= H^0_f(\varepsilon \nu, w) \Res_{s = \varepsilon \nu} \ \Lambda_f(s) y^{\frac{1}{2} - \varepsilon \nu}  
+ i \left( \frac{|w + i|}{w + i} \right) (N(1 + w^2)y)^{- \frac{1}{2} + \varepsilon \nu} H^0_g(\varepsilon \nu, -w) \Res_{s = \varepsilon \nu} \ \Lambda_g(s) \\
= H^0_f(\varepsilon \nu, w) \ \Res_{s = \varepsilon \nu} \ \Lambda_f(s) \cdot \im(z)^{\frac{1}{2} - \varepsilon \nu}
+ i \left( \frac{|z|}{z} \right) H^0_g(\varepsilon \nu, -w) \ \Res_{s = \varepsilon \nu} \ \Lambda_g(s) \cdot \im \left( -\frac{1}{Nz} \right)^{\frac{1}{2} - \varepsilon \nu}.
\end{multline}
Therefore
\begin{multline}\label{eq.tildefHtildegH}
    \tilde{f}(z) - H^0_f(\varepsilon \nu, w) \ \Res_{s = \varepsilon \nu} \ \Lambda_f(s) \cdot \im(z)^{\frac{1}{2} - \varepsilon \nu} \\
    = i \left( \frac{|z|}{z} \right) \cdot \left( \tilde{g} \left( -\frac{1}{Nz} \right) + H^0_g(\varepsilon \nu, -w) \ \Res_{s = \varepsilon \nu} \ \Lambda_g(s) \cdot \im \left( -\frac{1}{Nz} \right)^{\frac{1}{2} - \varepsilon \nu} \right).
\end{multline}
Since
\[
\hyp{ \frac{\varepsilon \nu + \nu + (1 + \varepsilon)/2}{2}}{\frac{\varepsilon \nu - \nu + (1 - \varepsilon)/2}{2} }{ \frac{1}{2} }{-w^2} = 1,\ \ \frac{\gamma_f^-(\varepsilon \nu + 1)}{\gamma_f^+(\varepsilon \nu)} = 0, 
\]
we know that $H^0_f(\varepsilon \nu, w) = H^0_g(\varepsilon \nu, -w) = 1$ and so equation~\eqref{eq.tildefHtildegH} becomes equation~\eqref{eq.gzf1z}. 

Analogously to \cite[Section 3.3]{TCTMF}, for a primitive Dirichlet character $\psi$ of conductor $q \in \mathcal{P}$ we define
\[
f_{\psi}(x + iy) = \sum_{n = 1}^{\infty} \frac{\psi(n)a_n}{\sqrt{\pi n}} \left( W_{\frac{1}{2}, \nu}(4 \pi ny) e(nx) + \psi(-1) \varepsilon \nu W_{-\frac{1}{2}, \nu}(4 \pi ny) e(-nx) \right),
\]
and similarly for $g_{\psi}$. Replacing $a_n$ by $\psi(n)a_n$ in equation~\eqref{eq.Mellinh} we know, for $\re(s)$ sufficiently large:
\begin{equation} \label{eq: mellin twisted f with lines}
H_f^{\frac{1 - \psi(-1)}{2}}(s, w) \Lambda_f(s, w) = (-i)^{\frac{1 - \psi(-1)}{2}} \int_0^{\infty} f_{\psi}(wy + iy) y^{s - \frac{1}{2}} \ \frac{dy}{y}.
\end{equation}
For $\ell \in \{ 0, 1 \}$, we have
\begin{equation}\label{eq.Hfidentityfull} 
H^{\ell}_f(1 - s, w) = i \left( \frac{|w + i|}{w + i} \right) (1 + w^2)^{s - \frac{1}{2}} H^{\ell}_f(s, -w).
\end{equation}
Given $a,b\in\mathbb{Z}$ such that if $c \in \{ a, b \}$ is non-zero then $(c, q) = 1$, set $\alpha = \frac{a}{q}$ and $\beta = \frac{b}{q}$. Since $\alpha,\beta\in\mathbb{R}$, we have
\[
f(z + \alpha) - f(z + \beta) = \tilde{f}(z + \alpha) - \tilde{f}(z + \beta) = \frac{1}{q - 1} \sum_{\psi~\mathrm{mod}~q} \tau(\overline{\psi}) (\psi(a) - \psi(b)) f_{\psi}(z). 
\]
Applying Mellin inversion to equation~\eqref{eq.Mellinh} gives

$$ f(z + \alpha) - f(z + \beta) = \sum_{\ell \in \{ 0, 1 \} } \frac{1}{2\pi i} \int_{(c)} \left( \Lambda_f\left(s, \alpha, \cos^{(\ell)}\right) - \Lambda_f\left(s, \beta, \cos^{(\ell)}\right) \right) H_f^{\ell}(s, w) y^{\frac{1}{2} - s} \ ds. $$
By equation~\eqref{eq.completedadditivetwist}, $\Lambda_f\left(s, \alpha, \cos^{(\ell)}\right) - \Lambda_f\left(s, \beta, \cos^{(\ell)}\right)$ is a linear combination of functions of the form $\Lambda_f(s, \psi)$ where $\psi$ is a primitive Dirichlet character of conductor $q$, and so $\Lambda_f(s, \alpha, \cos^{(\ell)}) - \Lambda_f(s, \beta, \cos^{(\ell)})$ is an entire function of $s$. Moreover each $H_f^{\ell}(s, w)$ is an entire function. Therefore we may use Cauchy's theorem to shift the path of integration to $\re(s) = 1 - c$ to obtain

\begin{multline}
    \int_{(c)} \left( \Lambda_f\left(s, \alpha, \cos^{(\ell)}\right) - \Lambda_f\left(s, \beta, \cos^{(\ell)}\right) \right) H_f^{\ell}(s, w) y^{\frac{1}{2} - s} \ ds \\
     = \int_{(1 - c)} \left( \Lambda_f\left(s, \alpha, \cos^{(\ell)}\right) - \Lambda_f\left(s, \beta, \cos^{(\ell)}\right) \right) H_f^{\ell}(s, w) y^{\frac{1}{2} - s} \ ds \\
     = \int_{(c)} \left( \Lambda_f(1 - s, \alpha, \cos^{(\ell)}) - \Lambda_f(1 - s, \beta, \cos^{(\ell)}) \right) H_f^{\ell}(1 - s, w) y^{s - \frac{1}{2}} \ ds.
\end{multline}
From equation~\eqref{eq.completedadditivetwists}, we deduce
\begin{multline*}
\Lambda_f\left(1 - s, \alpha, \cos^{(\ell)}\right) - \Lambda_f\left(1 - s, \beta, \cos^{(\ell)}\right) \\ = \frac{ i^{\ell} \chi(q) (q^2N)^{s - \frac{1}{2}} }{q - 1} \sum_{\substack{ \psi~\mathrm{mod}~q\\ \psi(-1) = (-1)^{\ell}}} \psi(-N) \tau(\psi) (\psi(a) - \psi(b)) \Lambda_g(s, \overline{\psi}).
\end{multline*}
Using this and equation \eqref{eq.Hfidentityfull} we obtain

\begin{multline*}
\int_{(c)} \left( \Lambda_f\left(s, \alpha, \cos^{(\ell)}\right) - \Lambda_f\left(s, \beta, \cos^{(\ell)}\right) \right) H_f^{\ell}(s, w) y^{\frac{1}{2} - s} \ ds \\
= \frac{ i^{\ell} \chi(q) }{q - 1} \cdot i \left( \frac{|w + i|}{w + i} \right)  \cdot \sum_{\substack{ \psi~\mathrm{mod}~q \\ \psi(-1) = (-1)^{\ell}}} \psi(-N) \tau(\psi) (\psi(a) - \psi(b)) \\ \cdot \int_{(c)} \Lambda_g(s, \overline{\psi}) H_g^{\ell} (s, -w) \left( \frac{1}{q^2N(1 + w^2)y} \right)^{\frac{1}{2} - s} \ ds.
\end{multline*}
Applying Mellin inversion to equation \eqref{eq: mellin twisted f with lines} gives

\begin{multline*}
\frac{1}{2 \pi i} \int_{(c)} \left( \Lambda_f(s, \alpha, \cos^{(\ell)}) - \Lambda_f(s, \beta, \cos^{(\ell)}) \right) H_f^{\ell}(s, w) y^{\frac{1}{2} - s} \ ds \\
= \frac{ i^{\ell} \chi(q) }{q - 1} \cdot i \left( \frac{|w + i|}{w + i} \right)  \cdot \sum_{\substack{ \psi~\mathrm{mod}~q \\ \psi(-1) = (-1)^{\ell}}} \psi(-N) \tau(\psi) (\psi(a) - \psi(b)) \cdot (-i)^{\ell} g_{\overline{\psi}} \left(-\frac{1}{q^2Nz} \right),
\end{multline*}
from which we deduce:
\[
f(z + \alpha) - f(z + \beta) = \frac{\chi(q)}{q - 1} \cdot i \left( \frac{|w + i|}{w + i} \right) \sum_{\psi~\mathrm{mod}~q} \psi(-N) \tau(\psi) ( \psi(a) - \psi(b) ) g_{\overline{\psi}} \left( -\frac{1}{q^2Nz} \right). 
\]
For all $a, b \not \equiv 0$ mod $q$ we conclude that:
\begin{multline*}
\sum_{\psi~\mathrm{mod}~q} \psi(a) \left( \tau(\overline{\psi}) f_{\psi} - i \chi(q) \psi(-N) \tau(\psi) g_{\overline{\psi}} \ |_1 W_{q^2N} \right) \\ = \sum_{\psi~\mathrm{mod}~q} \psi(b) \left( \tau(\overline{\psi}) f_{\psi} - i \chi(q) \psi(-N) \tau(\psi) g_{\overline{\psi}} \ |_1 W_{q^2N} \right).
\end{multline*}
The expression on the left hand side is therefore independent of $a \not \equiv 0$ mod $q$, so we have a linear combination of primitive characters producing the principal character modulo $q$. On the other hand, the set of Dirichlet characters mod $q$ is linearly independent over $\C$ so the coefficients of this linear combination must all vanish. Therefore, for any primitive Dirichlet character $\psi$ mod $q$: 
\begin{equation}\label{cor.gW}	
f_{\psi} = i \psi(-N) \chi(q) \frac{\tau(\psi)}{\tau(\overline{\psi})} g_{\overline{\psi}} \ |_1 W_{q^2N}. 
\end{equation}
Let $\psi$ be a primitive Dirichlet character of conductor $q \in \mathcal{P}$. We have
\[
f_{\psi} = \frac{1}{\tau(\overline{\psi})} \sum_{\substack{a \text{ mod } q \\ (a, q) = 1}} \overline{\psi}(a) f|_1 P_{\frac{a}{q}}.
\]
Using $f = i \ g \ |_1 H_N$ and equation~\eqref{cor.gW} we obtain
\begin{multline}
    -i \chi(q) \psi(-N) \tau(\psi) g_{\overline{\psi}}
    = \tau(\overline{\psi}) f_{\psi}  |_1 W_{q^2N} 
    = i \sum_{\substack{a \text{ mod } q \\ (a, q) = 1}} \overline{\psi}(a) g \ |_1 H_N \ P_{\frac{a}{q}} \ W_{q^2N} \\
    = i \sum_{\substack{a \text{ mod } q \\ (a, q) = 1}} \overline{\psi}(a) g \ |_1 \begin{pmatrix} -q^2N & 0 \\ qaN^2 & -N \end{pmatrix} 
    = -i \sum_{\substack{a \text{ mod } q \\ (a, q) = 1}} \overline{\psi}(a) g \ |_1 \begin{pmatrix} q^2 & 0 \\ -qaN & 1 \end{pmatrix} \\
    = -i \psi(-N) \sum_{\substack{a \text{ mod } q \\ (a, q) = 1}} \psi(a) g \ |_1 \begin{pmatrix} q & -a \\ - \tilde{a}N & \frac{Na \tilde{a} + 1}{q} \end{pmatrix} \ P_{\frac{a}{q}}.
\end{multline}
Here $\tilde{a}$ is an integer that is inverse to $-aN$ mod $q$; notice $\overline{\psi}(a) = \psi(-N) \psi(\tilde{a})$. Therefore for all primitive characters $\psi$ we have
\begin{equation}\label{eq.identity}
\chi(q) \sum_{\substack{a \text{ mod } q \\ (a, q) = 1}} \psi(a) g \ |_1 P_{\frac{a}{q}} = \sum_{\substack{a \text{ mod } q \\ (a, q) = 1}} \psi(a) g \ |_1 \begin{pmatrix} q & -a \\ - \tilde{a}N & \frac{Na \tilde{a} + 1}{q} \end{pmatrix} \ P_{\frac{a}{q}}.
\end{equation}
The argument from now on is very similar to \cite[Section~3.3]{TCTMF} and \cite[Section~1.5]{AFAR}. As the primitive Dirichlet characters of conductor $q$ span the complex vector space 
\begin{equation}\label{eq.PrimSpan}
V = \left\{ \theta: (\Z / q\Z)^{\times} \rightarrow \C : \sum_{\substack{a \text{ mod } q \\ (a, q) = 1}} \theta(a) = 0 \right\},
\end{equation}
we can replace $\psi$ in equation~\eqref{eq.identity} with any function $\theta\in V$. Take $s \in \mathcal{P} \ \backslash \ \{q\}$ and write $qs = 1 + r \tilde{r} N$ for some integers $r, \ \tilde{r}$. Define $\theta_1\in V$ by
\begin{align*}
    \theta_1(n) = \begin{cases} \pm 1, & n \equiv \pm r \text{ mod } q, \\ 0 & \text{otherwise}. \end{cases}
\end{align*}
Equation~\eqref{eq.identity} then becomes
\begin{align*}
    \chi(q) \left( g \ |_1 P_{\frac{r}{q}} - g \ |_1 P_{-\frac{r}{q}} \right) = g \ |_1 \begin{pmatrix} q & -r \\ - \tilde{r}N & s \end{pmatrix} \ P_{\frac{r}{q}} - g \ |_1 \begin{pmatrix} q & r \\  \tilde{r}N & s \end{pmatrix} \ P_{-\frac{r}{q}}.
\end{align*}
We define
\[
A_{\pm} = \begin{pmatrix} q & \pm r \\ \pm \tilde{r}N & s \end{pmatrix},
\]
which satisfies
\[
A_{\pm}^{-1} = \begin{pmatrix} s & \mp r \\ \mp \tilde{r}N & q \end{pmatrix}.
\] 
We have
\begin{equation*}
    (A_+ - \chi(q)) P_{-\frac{r}{q}} \equiv (A_- - \chi(q)) P_{\frac{r}{q}},\ \
    (A_-^{-1} - \chi(s)) P_{-\frac{r}{s}} \equiv (A_+^{-1} - \chi(s)) P_{\frac{r}{s}},
\end{equation*}
and so $(A_+ - \chi(q))(1 - M(q, s, r)) \in \Omega_1$ where
\[
M(q, s, r) = A_+^{-1} P_{\frac{2r}{s}} A_- P_{\frac{2r}{q}} = \begin{pmatrix} 1 & \frac{2r}{q} \\ -\frac{2\tilde{r}N}{s} & -3 + \frac{4}{qs} \end{pmatrix} \in \text{SL}_2(\R).
\]
The matrix $M(q, s, r)\in\SL_2(\R)$ is an elliptic matrix of infinite order, since its eigenvalues are not roots of unity and the absolute value of its trace is $<2$. Let $\gamma = \begin{pmatrix} a & b \\ cN & d \end{pmatrix}$ be an arbitrary element of $\Gamma_0(N)$. Choose distinct primes $q, \ s \in \mathcal{P}$ with $q = a - ucN$ and $s = d - vcN$ for integers $u$ and $v$. Let $r = b - av + uvcN - ud$, so that
\[
g |_1 \begin{pmatrix} a & b \\ cN & d \end{pmatrix} = g |_1 P_u \begin{pmatrix} q & r \\ cN & s \end{pmatrix} P_v = g |_1 \begin{pmatrix} q & r \\ cN & s \end{pmatrix} P_v.
\]
We have $G_1 \ |_1 \ M(q, s, r) = G_1$ where $G_1 = g |_1 \begin{pmatrix} q & r \\ cN & s \end{pmatrix} - \chi(a) g$ (note $\chi(q) = \chi(a)$). By Lemma \ref{prop: one circle} we must have $G_1 = 0$, which establishes the weight-$1$ modular transformation law. To complete the proof of the proof of Theorem~\ref{thm:WeilMaass}, we note that the correct growth of $f(z)$ and $g(z)$ at the cusps of $\Gamma_0(N)$ is a consequence of the assumption that $L_f(s)$ and $L_g(s)$ converge in some right half-plane\footnote{The proof given in \cite[V-15]{Ogg} can be modified to real-analytic forms.}.  
\subsection{Proof of Theorem~\ref{thm.DPZmaass}}\label{TwistingModuli}
Let $k\in\{0,1\}$ and say $q$ is a prime such that $q \nmid N$. For all primitive $\psi$ mod $q$, we have
\begin{equation} \label{eq: no prop 3 8}
 \sum_{\substack{a \text{ mod } q \\ (a, q) = 1}} \psi(a) f |_k P_{\frac{a}{q}} = \sum_{\substack{a \text{ mod } q \\ (a, q) = 1}} \psi(a) f |_k \begin{pmatrix} q & -a \\ - \tilde{a}N & \frac{Na \tilde{a} + 1}{q} \end{pmatrix} \ P_{\frac{a}{q}}.
\end{equation}
Indeed, when $k=0$ this follows from \cite[equation~(3.16)]{TCTMF} and when $k=1$ this is equation~\eqref{eq.identity}. Though the argument in \cite{DPZ} is written for a group action slightly different to \eqref{eq.weightkaction}, analysing the proof we see that it may be modified to our situation using equations \eqref{eq.periodic} and \eqref{eq.FEslash} and the axiomatic properties of group actions. 
\subsection{Proof of Theorem~\ref{thm.CFMaass}}\label{sec.CFmaass}
The Fourier expansion of $f$ immediately implies it is an eigenfunction of $\Delta_k$. Below we will establish the modular transformation laws with respect to $\Gamma_0(N)$. Given that, the correct growth property of $f(z)$ at the cusps of $\Gamma_0(N)$ is a consequence of the convergence of $L_f(s)$ in some right half-plane. 
\begin{proof}[Proof when $1\leq N\leq4$]
In this case $\Gamma_0(N)$ is generated by the matrices $P_1,H_NP_{-1}H_N^{-1}$, and so the result follows from equations~\eqref{eq.periodic} and \eqref{eq.FEslash}.
\end{proof}
\begin{proof}[Proof when $N\in\{5,6,7,8,9,10,12,16\}$]
The proof given in \cite{EOHCT} does not require any elliptic matrices and so can be modified to real-analytic setting using the action~\eqref{eq.weightkaction} and equations~\eqref{eq.periodic},~\eqref{eq.FEslash}, and \eqref{eq.apTp}.
\end{proof}
For $N\in\mathbb{Z}$, introduce
\[
Q=\begin{pmatrix}
-1&0\\0&-1
\end{pmatrix},\ \ W_N=\begin{pmatrix}
1&0\\ N &1
\end{pmatrix}.
\]
In particular,
\begin{equation}\label{eq.equiv1}
Q\equiv1,\ \ W_N=H_NP_{-1}H_N^{-1}\equiv1.
\end{equation}
\begin{proof}[Proof when $N=11$]
	When $k=1$ the result follows from \cite{EOHCT} and Lemma~\ref{prop: one circle}. When $k=0$, analysing the proof given in \cite{EOHCT} it suffices to show that $f$ is invariant under the matrix $\begin{pmatrix}3&-1\\-11&4\end{pmatrix}$. From [loc. cit.], we know that
	\[1-\begin{pmatrix}3&-1\\-11&4\end{pmatrix}=\left(1-\begin{pmatrix}3&-1\\-11&4\end{pmatrix}\right)\begin{pmatrix}1&-2/3\\11/2&-8/3 \end{pmatrix},\]
	in which the final matrix on the right-hand side is elliptic of infinite order. On the other hand, we know that
	\[W_{11}=\begin{pmatrix}2&-1\\11&5\end{pmatrix}P\begin{pmatrix}3&-1\\-11&4\end{pmatrix}\begin{pmatrix}2&-1\\11&5\end{pmatrix}^{-1}\begin{pmatrix}3&-1\\-11&4\end{pmatrix}^{-1}\equiv1,\]
	and that $\begin{pmatrix}2&-1\\11&5\end{pmatrix}\equiv1$ by [loc. cit.]. Combining these two facts, we deduce that
	\[1-\begin{pmatrix}3&-1\\-11&4\end{pmatrix}=\left(1-\begin{pmatrix}3&-1\\-11&4\end{pmatrix}\right)\begin{pmatrix}2&-1\\11&5\end{pmatrix}^{-1}.\]
	The final matrix on the right-hand side does not have the same fixed point as $\begin{pmatrix}1&-2/3\\11/2&-8/3 \end{pmatrix}$ and so we are done by Lemma~\ref{prop: one circle}.
\end{proof}
We introduce
\[
A=\begin{pmatrix} 7&-1\\36&-5\end{pmatrix}\in\Gamma_0(18),\ \ B=\begin{pmatrix} 13&-8\\18&-11\end{pmatrix}\in\Gamma_0(18),
\]
\[
C=\begin{pmatrix} 13&-2\\20&-3\end{pmatrix}\in\Gamma_0(20),\ \ D=\begin{pmatrix} 19&-4\\24&-5\end{pmatrix}\in\Gamma_0(24).
\]
One may check that the table below records a set of generators for $\Gamma_0(N)$ when $N\in\{18,20,24\}$.
\begin{center}
	\begin{tabular}{|c||c|}\hline
		$N$ & \text{Generators for }$\Gamma_0(N)$\\
		\hline\hline
		18&$ P_1,\ \ Q,\ \ A,\ \ B,\ \ \begin{pmatrix} 71 & -15 \\ 90 &-9 \end{pmatrix},\ \ 
		\begin{pmatrix} 55 & -13 \\ 72 &-17 \end{pmatrix},\ \ 
		\begin{pmatrix} 7 & -2\\ 18 &-5 \end{pmatrix}, \ \ 
		\begin{pmatrix} 31 & -25 \\ 36 &-29 \end{pmatrix}.$\\
		\hline
		20&$\begin{array}{c} P_1,\ \ Q,\ \ C,\ \ \begin{pmatrix} 49 & -9 \\ 60 &-11 \end{pmatrix},\ \ \begin{pmatrix} 31 & -7 \\ 40 &-9 \end{pmatrix},\ \ \begin{pmatrix} 29 & -8 \\ 40 &-11 \end{pmatrix},\\ \begin{pmatrix} 31 & -9 \\ 100 &-29 \end{pmatrix},\ \ \begin{pmatrix} 17 & -6 \\ 20 &-7 \end{pmatrix}.\end{array}$\\
		\hline
		24& $\begin{array}{c} P_1,\ \ Q,\ \ D,  \ \ \begin{pmatrix} 
		19 & -2 \\ 48 &-5 
		\end{pmatrix},\ \ 
		\begin{pmatrix} 
		61 & -7 \\ 
		96 &-11 
		\end{pmatrix},\ \
		\begin{pmatrix} 
				59 & -8 \\ 
				96 &-13
				\end{pmatrix},\ \ 
				\begin{pmatrix} 
						13 & -2 \\ 
						72 &-11 
						\end{pmatrix},\\
			\begin{pmatrix} 
					17 & -5 \\ 
					24 &-7 
					\end{pmatrix},\ \
					\begin{pmatrix} 
							61 & -25 \\ 
							144 &-5 
							\end{pmatrix},\ \
							\begin{pmatrix} 
									13 & -6 \\ 
									24 &-11 
									\end{pmatrix},\ \
									\begin{pmatrix} 
											-5 & -2 \\ 
											48 &-19 
											\end{pmatrix}. \end{array}$\\
		\hline
	\end{tabular}
\end{center}
For $N\equiv0$ mod $2$, define
\[
J_N=\frac{1}{\sqrt{2}}\begin{pmatrix}
-2 & 1 \\ N & -\frac{N+2}{2}
\end{pmatrix}\in\SL_2(\R).
\]
Though the argument in \cite[Lemma~3]{EOHCT} is written for a group action slightly different to \eqref{eq.weightkaction}, analysing the proof we see that it may be modified to show that $J_{18}\equiv-1$ provided $2^2$ does not divide $N$. On the other hand, for $N\equiv0$ mod 4 define
\[
L_N=\begin{pmatrix}
\frac{N}{4}-1 & -\frac12 \\ \frac{N}{2} & -1
\end{pmatrix}\in\SL_2(\R),
\]
which satisfies $L_N\equiv NL_N=(P_{\frac12}H_N)^2\equiv1$. We may now rewrite the table above.
\begin{center}
	\begin{tabular}{|c||c|}\hline
		$N$ & \text{Generators for }$\Gamma_0(N)$\\
		\hline\hline
		18&$\begin{array}{c}P_1,\ \ Q,\ \ A,\ \ B,\ \ BH_{18}J_{18}W_{18}J_{18}H_{18}^{-1}W_{18}^{-1}A^{-1},\\ QBJ_{18}^{-2}W_{18}^{-1}A^{-1},\ \ Q(W_{18}J_{18})^{-2}A^{-1}, \ \ QP_1H_{18}J_{18}^{-1}W_{18}^{-1}J_{18}^{-1}H_{18}^{-1}B^{-1}. \end{array}$\\
		\hline
		20&$\begin{array}{c}P,\ \ Q, \ \ C \ \ CL_{20}^{-2}W_{20}^{-1}L_{20}^{-1}CQ,\ \ CL_{20}^{-2}CL_{20}^{-1},\\ 
		CL_{20}^{-1}W_{20}^{-1}L_{20}^{-1}CL_{20}^{-1}Q, \ \ L_{20}^2C^{-1}W_{20}^{-1}L_{20}^{-1}CL_{20}^{-1},\ \ CL_{20}^{-3}.\end{array}$\\
		\hline
		24& $\begin{array}{c} P_1,\ \ Q,\ \ D,  \ \ L_{24}^2,\ \ L_{24}W_{24}P_1^{-1}DL_{24}, \ \ QP_{1/2}L_{24}^{-1}D^{-1}P_{1/2}L_{24}\\ H_{24}P_1^{-1}DP_{1/2}^{-1}DP_{1/2}H_{24}^{-1}, \ \ QP_{1/2}D^{-1}P_{1/2} \\ L_{24}P_1^{-1}L_{24}^{-1}, \ \ L_{24}W_{24}L_{24}^{-1}, \ \ (W_{24}^{-1}L_{24}^{-1}P_1)^2. \end{array}$\\
		\hline
	\end{tabular}
\end{center}
Note that these generating sets are far from minimal, for example, the list in the case $N=20$ may be simplified to:
\[P_1,\ \ Q,\ \ C,\ \ L_{20}W_{20}L_{20}^{-1},\ \ L_{20}^{-1}W_{20}L_{20},\ \ L_{20}CL_{20}^{-1},\ \ L_{20}^{-1}CL_{20},\ \ L_{20}^3.\]
\begin{proof}[Proof in case $N=18$]
Since $4\nmid 18$, we have that $J_{18}\equiv1$. By the list of generators for $\Gamma_0(18)$ given in the table and equations \eqref{eq.periodic}, \eqref{eq.FEslash}, and \eqref{eq.equiv1}, it suffices to prove that $\gamma\equiv1$ for $\gamma\in\{A,B\}$. Noting that
\[
B=PH_{18}AH_{18}^{-1}P^{-1},\ \ A\equiv\begin{pmatrix}-11&-1\\-54&-5 \end{pmatrix}W_{18}^{-1},
\]
it is enough to show that $\begin{pmatrix}-11&-1\\-54&-5\end{pmatrix}\equiv1$. Because $3^2|N$, we have $f|_kU_3=0$ which implies
$P_{1/3}+P_{-1/3}\equiv-1$. Therefore:
\begin{equation}\label{eq.P13P-13}
(P_{1/3}+P_{-1/3})J_{18}\equiv1\equiv J_{18}(P_{1/3}+P_{-1/3}).
\end{equation}
Multiplying equation~\eqref{eq.P13P-13} by $P_{1/3}J_{18}$, we deduce:
\[
\begin{pmatrix}-11&-1\\-54&-5 \end{pmatrix}+\begin{pmatrix}25&-7/3\\-54&-5 \end{pmatrix}\equiv\begin{pmatrix}13&4/3\\-108&-11 \end{pmatrix}+1,
\]
and on the other we have
\[
\begin{pmatrix}13&4/3\\-108&-11 \end{pmatrix}\equiv\begin{pmatrix}25&-7/3\\-54&-5 \end{pmatrix}.
\]
Combining the previous two equations, we are done.
\end{proof}
\begin{proof}[Proof in case $k=1$ and $N\in\{14,15,17,20,23,24\}$]
For $N\in\{14,15,17,23\}$, we may mimic the argument given in \cite{EOHCT} using Lemma~\ref{prop: one circle} in place of \cite[Lemma~5]{EOHCT}. When $N=20$, it suffices to prove that $C\equiv1$. We have
\begin{equation}\label{eq.P13}
\left(1-\begin{pmatrix} 3 & -1 \\ 40 & -13 \end{pmatrix}P_{1/3}\right)+\left(1-\begin{pmatrix} 3 & -2 \\ 20 & -13 \end{pmatrix}P_{2/3}\right)\equiv0.
\end{equation}
Noting that
\[
\begin{pmatrix} 3 & -1 \\ 40 & -13 \end{pmatrix}\equiv CL_{20}^{-2},\ \ \begin{pmatrix} 3 & -2 \\ 20 & -13 \end{pmatrix}\equiv C^{-1},
\]
equation~\eqref{eq.P13} becomes
\[
1-C\equiv(1-C)C^{-1}P_{1/3}L^2_{20}.
\]
The matrix $C^{-1}P_{1/3}L^2_{20}$ is elliptic of infinite order and so the result follows from Lemma~\eqref{prop: one circle}. When $N=24$, it suffices to prove that $D\equiv1$. We have
\begin{multline}\label{eq.P15}
\left(1-\begin{pmatrix} 5 & -4 \\ 24 & -19 \end{pmatrix}P_{4/5}\right)+\left(1-\begin{pmatrix} 5 & 4 \\ -24 & -19 \end{pmatrix}P_{-4/5}\right)\\
+\left(1-\begin{pmatrix} 5 & -2 \\ 48 & -19 \end{pmatrix}P_{2/5}\right)+\left(1-\begin{pmatrix} 5 & 2 \\ -48 & -19 \end{pmatrix}P_{-2/5}\right)\equiv0.
\end{multline}
Noting that
\[
\begin{pmatrix} 5 & -4 \\ 24 & -19 \end{pmatrix}\equiv QD^{-1}\ \ \begin{pmatrix} 5 & 4 \\ -24 & -19 \end{pmatrix}\equiv DP,
\]
\[
\begin{pmatrix} 5 & -2 \\ 48 & -19 \end{pmatrix}\equiv QL_{24}^{-2}\equiv1,\ \ \begin{pmatrix} 5 & 2 \\ -48 & -19 \end{pmatrix}\equiv Q(W_{24}^{-1}L_{24}^{-1}P)^2\equiv1,
\]
equation \eqref{eq.P15} becomes
\[
1-D\equiv (1-D)D^{-1}P_{3/5}.
\]
The matrix $D^{-1}P_{3/5}$ is elliptic of infinite order. The result now follows from Lemma~\ref{prop: one circle}.
\end{proof}
\subsection{Proof of Theorem~\ref{thm.Holomorphic}}
Given a function $f$ on $\mathcal{H}$ and $\gamma=\begin{pmatrix} a&b\\c&d\end{pmatrix}\in\SL_2(\R)$, define the function $f|^k\gamma$ by
\[
\left(f|^k\gamma\right)(z)=\det(\gamma)^{k/2}(cz+d)^{-k}f\left(\frac{az+b}{cz+d}\right).
\]
Let $f$ be defined as in \eqref{eq.holexp}. By equation~\eqref{eq.holexp} (resp. \eqref{eq.holFE}), we know that
\[
f=f|^kP,\ \ f=f|^kH_N.
\]
We may now argue as in Section~\ref{sec.CFmaass}, replacing Lemmas~\ref{prop:2circles} and~\ref{prop: one circle} by:
\begin{lemma}[Lemma~5 in \cite{EOHCT}]\label{lem.holWeil}
Suppose $F$ is holomorphic on $\uhp$ and $E\in\SL_2(\R)$ is elliptic. If $f|^kE=f$, then either $E$ has finite order or $f$ is constant.
\end{lemma}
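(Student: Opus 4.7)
The plan is to mirror the argument of Lemma~\ref{prop: one circle}, replacing the continuity of $\tilde c$ and the clever choice $\omega=-1$ by holomorphy of a disc-model function and a Taylor coefficient argument. Let $a\in\uhp$ be the unique fixed point of $E$, and form the Cayley transform
\[
K=\frac{1}{\sqrt{a-\bar a}}\begin{pmatrix}1 & -a\\ 1 & -\bar a\end{pmatrix},
\]
which carries $\uhp$ onto the unit disc $\D$ and sends $a\mapsto 0$. Then $KEK^{-1}=\mathrm{diag}(e^{i\pi\theta},e^{-i\pi\theta})$ for some $\theta\in\R$, and $E$ has finite order if and only if $\theta\in\Q$, so I may assume $\theta\notin\Q$ and aim to show $f$ is constant.

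Extend the slash action $|^k$ to $K$ by the same defining formula and set $\tilde f=f|^k K^{-1}:\D\to\C$. Because $k$ is an integer, the factor $(c_{K^{-1}}w+d_{K^{-1}})^{-k}$ is a single-valued, non-vanishing holomorphic function on $\D$, so $\tilde f$ inherits holomorphy from $f$. A direct computation in the disc model shows that the invariance $f|^k E=f$ translates to
\[
\tilde f(w)=e^{ik\pi\theta}\,\tilde f\bigl(e^{2\pi i\theta}w\bigr),\qquad w\in\D,
\]
and iterating with $E^n$ for $n\in\Z$ yields $\tilde f(w)=e^{ikn\pi\theta}\,\tilde f(e^{2\pi in\theta}w)$.

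To finish, expand $\tilde f(w)=\sum_{j\geq 0}c_j w^j$ on $\D$ and substitute into the displayed identity; comparing coefficients of $w^j$ gives $c_j=c_j\,e^{i\pi\theta(k+2j)n}$ for every $n\in\Z$. Since $\theta$ is irrational, this forces $c_j=0$ whenever $k+2j\neq 0$. If $k>0$ there is no non-negative integer $j$ solving $k+2j=0$, so $\tilde f\equiv 0$ and hence $f\equiv 0$; if $k=0$ only $c_0$ can survive, so $\tilde f$ is constant and therefore so is $f$. In either case $f$ is constant, as required. The only real subtlety is the step extending $|^k$ to the non-real matrix $K$ and verifying holomorphy of $\tilde f$; since $k\in\Z$, no branch choice is needed and this is routine, exactly as in the proof of Lemma~\ref{prop: one circle}.
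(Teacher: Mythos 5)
Your argument is correct. Note first that the paper does not supply its own proof of this lemma: it is imported verbatim as Lemma~5 of \cite{EOHCT}, so the only in-house comparison point is Lemma~\ref{prop: one circle}, of which your proof is the natural holomorphic analogue. The key steps all check out: $\det K=1$, so the slash action extends to $K$ without any branch ambiguity beyond the harmless constant $\sqrt{a-\bar a}$; the cocycle identity gives $\tilde f\,|^k\,\mathrm{diag}(e^{i\pi\theta},e^{-i\pi\theta})=\tilde f$, i.e.\ $\tilde f(w)=e^{ik\pi\theta}\tilde f(e^{2\pi i\theta}w)$; the automorphy factor $(1-w)^{-k}$ of $K^{-1}$ is non-vanishing and holomorphic on $\D$, so $\tilde f$ is holomorphic there; and $E$ has finite order exactly when $\theta\in\Q$. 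Comparing Taylor coefficients then kills $c_j$ whenever $\theta(k+2j)\notin 2\Z$, which for irrational $\theta$ means whenever $k+2j\neq 0$. Where you diverge from Lemma~\ref{prop: one circle} is only in how irrationality of $\theta$ is exploited: the paper's weight-$1$ argument uses density of $\{e^{2\pi i n\theta}\}$ in $S^1$ plus continuity and the single evaluation $\omega=-1$, whereas you use holomorphy to reduce to monomials; your route is cleaner here and is essentially the original Conrey--Farmer argument. Two cosmetic remarks: only $n=1$ is needed (the iteration over all $n\in\Z$ is superfluous), and your case analysis covers $k\geq 0$, which suffices since Theorem~\ref{thm.Holomorphic} takes $k\in\Z_{>0}$ even; for negative even $k$ the monomial $w^{-k/2}$ genuinely survives, so the lemma as literally stated would need the hypothesis $k\geq 0$ anyway.
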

Holomorphy of $f$ at the cusps follows from convergence of $L_f(s)$ in some right half-plane, as in \cite[V-14]{Ogg}.
\bibliography{TwistingModuli}
\bibliographystyle{alpha}
\end{document}